\theoremstyle{plain}
\newtheorem{theorem}{Theorem}
\newtheorem{proposition}{Proposition}
\newtheorem{corollary}{Corollary}
\newtheorem{lemma}{Lemma}
\theoremstyle{definition}
\newtheorem{definition}{Definition}
\newtheorem{remark}{Remark}
\newtheorem{example}{Example}
\def\st{\textrm{star }}
\def\shf{\mathcal}
\def\col{\mathcal}
\def\st{\textrm{star }}
\def\id{\textrm{id }}
\def\cat{\mathbf}
\title{Assignments to sheaves of pseudometric spaces}
\author{Michael Robinson}
\email{michaelr@american.edu}
\affiliation{Mathematics and Statistics,
American University,
Washington, DC, USA}
\begin{document}
\maketitle

\begin{abstract}
  An assignment to a sheaf is the choice of a local section from each open set in the sheaf's base space, without regard to how these local sections are related to one another.  This article explains that the \emph{consistency radius} --- which quantifies the agreement between overlapping local sections in the assignment --- is a continuous map.  When thresholded, the consistency radius produces the \emph{consistency filtration}, which is a filtration of open covers.  This article shows that the consistency filtration is a functor that transforms the structure of the sheaf and assignment into a nested set of covers in a structure-preserving way.  Furthermore, this article shows that consistency filtration is robust to perturbations, establishing its validity for arbitrarily thresholded, noisy data.
\end{abstract}

\tableofcontents

\section{Introduction}

An \emph{assignment} of a sheaf of sets on a topological space selects a local section from each open set in the sheaf's base space without regard to how these local sections are related to one another.  This article explores the structure of a \emph{sheaf of pseudometric spaces on a topological space} by analyzing the structure of its assignments.  More structure is present in a sheaf of pseudometric spaces than in a sheaf of sets, since each stalk is a set equipped with a \emph{pseudometric}, and each restriction is required to be continuous with respect to the pseudometrics present on its domain and codomain.

A sheaf of pseudometric spaces is best thought of as a tool for measuring the self-consistency between the local sections in an assignment.  When data are present in a \emph{global section} on two overlapping open sets, their consistency is manifest by being equal upon restricting to the intersection of the open sets.  For a sheaf of sets, an assignment is self-consistent when it is a global section, and that is all that can be said.  But for a sheaf of pseudometric spaces, instead of requiring equality, we can instead ask for the data to restrict to values within some distance of each other.  The supremum\footnote{or other suitable aggregate} of these distances is called the \emph{consistency radius} of the assignment.  Since global sections evidently have zero consistency radius --- the distance between data restricted to the same open set is always zero in a global section because they are equal --- positive consistency radius is an obstruction to an assignment being a global section.  

 It may happen that some parts of an assignment are more consistent than others, which is to say that restricting the consistency radius to subspaces of the sheaf's base space yields new information.  The resulting \emph{local consistency radius} can be used to identify open sets on which the assignment and sheaf are consistent to some specified level.   Sweeping a threshold value over the local consistency radius identifies portions of the base space on which an assignment is more or less in agreement with the restrictions of the sheaf.  Since increasing the threshold loosens the tolerance for data to restrict to the same value, the result is something akin to a filtration: any open set on which the assignment is consistent at a lower threshold will remain consistent at a higher threshold.  Making this intuition precise, one obtains the \emph{consistency filtration}.

 Providing precise definitions of \emph{consistency radius} and \emph{consistency filtration} occupy some of our attention in this article, but their value is squarely rooted in the properties we prove about them.  From the standpoint of applications, this article establishes that small perturbations in the data (the assignment) or the model (the sheaf) do not result in large changes in the consistency radius or in the consistency filtration.  Furthermore, the consistency filtration preserves specific relationships between related sheaves and assignments --- it is a covariant functor between appropriately constructed categories.  Although the results in this article require a limited base topological space --- sheaves on finite spaces are the focus, though a few results described in this article generalize to arbitrary topological spaces --- we do not restrict to sheaves over a single topological space.  \emph{Sheaf morphisms} will be permitted to transform a sheaf on one space to a sheaf on another space.

 This dual nature of the consistency filtration --- it is both a continuous function between topological spaces \emph{and} a covariant functor between categories --- arises because we can view the class of all sheaf-assignment pairs through two lenses: topological and categorial.  The stalks and restrictions of a given sheaf of pseudometric spaces are manifestly topological, but that same sheaf is also a special kind of functor.  This dual nature is not present for a sheaf of sets, since most of its interesting properties arise from being a functor, nor is this dual nature present for a continuous map between pseudometric spaces, since most of its interesting properties arise from being topological.  

 The topological viewpoint of sheaf-assignment pairs addresses the question of how to threshold data based on self-consistency, using the sheaf of pseudometric spaces as a tool to measure the assignment.  From this viewpoint, it is best to think of the consistency radius as a continuous function of the sheaf and of the assignment (jointly!).  Since the consistency radius is a nonnegative real number that quantifies how compatible the sheaf and assignment are (smaller means more consistent), it has a straightforward interpretation.  We can therefore turn the consistency radius on its head and ask, ``for a given real number threshold, which open sets have local consistency radius smaller than that threshold?''  The answer to this is the consistency filtration, which although it is not a filtration of subspaces in the usual sense, it is an order preserving function from the real numbers to the power set of the sheaf's base space topology, in which the ordering is given by \emph{coarsening}\footnote{The opposite of the \emph{refinement} of a cover!}.  There the topological story would seem to end, if not for the recent definition of a general \emph{interleaving distance} \cite{Harker_2018}.  With the interleaving distance in hand, this article shows that the consistency filtration is a continuous function as well.  This establishes that the consistency filtration is robust to perturbations, with \emph{persistent \v{C}ech cohomology} as a special case.

 In contrast, the categorial viewpoint emphasizes transformations of sheaf-assignment pairs, first by generalizing the notion of a sheaf morphism to address the case where the base space of the domain and the base space of the codomain differ, and secondly by focusing attention on only those sheaf morphisms that preserve \emph{both} the sheaf and assignment.   These definitions put the category of assignments of a sheaf of pseudometric spaces on firm theoretical ground.  With this specialized category in hand, consistency radius is not a functor, even though the morphisms transform it in a predictable way (Lemma \ref{lem:cr_morphism}).  This suggests that there still is a functor at work, which unsurprisingly turns out to be the consistency filtration when an appropriate category is constructed for its codomain.

 In taking these two viewpoints, this article defines several apparently novel categories for sheaf-assignment pairs and filtrations of covers, and obtains three results about their dual topological/categorial natures:
 \begin{description}
 \item[Theorem \ref{thm:consistency_radius_continuous}]: Consistency radius is a continuous function,
 \item[Theorem \ref{thm:cf_functor}]:  Consistency filtration is a covariant functor, and
 \item[Theorem \ref{thm:cf_robustness}]: Consistency filtration is a continuous function.
 \end{description}

\section{Interpretation and applications}

A \emph{sheaf of sets on a topological space} is a mathematical structure for representing local data.  While sheaves are general purpose and abstract, they do not require unrealistic assumptions about domain-specific data.  As such, they provide a convenient language for describing how systems composed of interrelated parts can interact.  In applications, this is valuable because sheaves can naturally represent systems composed of different types of subsystems.  Since subsystems are usually modeled (or engineered) separately and largely in isolation from one another, composing the state of several subsystems into a global composite state is not trivial.  If one requires that the composite system be represented as a sheaf, this imposes specific conditions on how subsystems may be assembled.  If these conditions are met, then the idea of a global composite state --- a \emph{global section} of the sheaf --- is well-defined and can be obtained algorithmically \cite{Robinson_sheafcanon}.  From a practical standpoint, this means that the process of asserting that the composite system be modeled as a sheaf simplifies and standardizes the bookkeeping tasks involved with modeling the composite system.

Over the years, these considerations have motivated various researchers to encode a number of systems as sheaves, such as
\begin{itemize}
\item Bayes nets and graphical models \cite{Robinson_multimodel},
\item Biochemical networks \cite{20161002_ACMBCB},
\item Communication networks \cite{Robinson_GlobalSIP_2014, ghrist2011network},
\item Control systems \cite{mansourbeigi2018sheaf,Mansourbeigi_2017},
\item Differential equations \cite{Robinson_qgtopo} and their discretizations (also as \emph{dual sheaves}) \cite{Robinson_multimodel},
\item Discrete and continuous dynamical systems (as \emph{cosheaves}) \cite{posettrack_techreport},
\item Flow networks \cite{nguemo2017sheaf, ghrist2013topological},
\item Formal models of interacting software objects \cite{zadrozny2018sheaf,nelaturi2016combinatorial,malcolm2009sheaves, Lilius_1993, goguen1992sheaf},
\item Multi-dimensional weighted or labeled graphs \cite{20170104_JMM_Purvine,Joslyn_GraphFest2016,Praggastis_2016},
\item Quantum information systems \cite{abramsky2015contextuality,abramsky2011sheaf},
\item Sensor networks \cite{Robinson_sheafcanon}, and
\item Signal processing chains \cite{Robinson_TSP_book}.
\end{itemize}

One may ask whether encoding any of these composite systems as a sheaves confers any particular benefit, or if sheaves are merely another universal encoding.  The payoff is that sheaf theory provides \emph{invariants}, which are general analytic tools that can be easily applied to perform certain data processing tasks, largely without imposing additional assumptions on the subsystem models.  This article discusses two of these invariants, the \emph{consistency radius} and \emph{consistency filtration}, that quantify the compatibility between the system's representation (which may be hypothetical, incomplete, or inaccurate) and data collected about the state of subsystems (which may be noisy or subject to other kinds of systematic errors).  Although this article does not directly address any of the specific systems mentioned above, the results obtained apply broadly to all of them.

\section{Historical context}

Recent years have seen a rapid growth in the field of \emph{topological data analysis}, which is the study of the topological properties of datasets.  The typical analysis involves transforming a \emph{point cloud} --- a collection of data points in a high-dimensional metric space --- into a combinatorial model of a topological space, such as an abstract simplicial complex, whose properties are then studied.  For instance, the \emph{simplicial homology} of an abstract simplicial complex can be used to identify potentially interesting features.  These combinatorial models typically depend on the choice of a threshold that is used to determine if two of the data points are ``close enough'' to be joined together.  One of the primary insights of topological data analysis is \emph{persistence}: one should not select a single threshold, but instead consider the ensemble of all thresholds.  This idea led to the \emph{persistent homology} of a point cloud \cite{Ghrist_2008,Zomorodian_2005,DeSilva_2004,Edelsbrunner_2002}, an idea that has since become quite popular because of its broad applicability and straightforward usage.

Even though persistent homology has wide appeal, it is limited to relatively homogeneous data sets, essentially those that can be represented as point clouds.  Data sets with more structure than that are difficult to study.  Sheaves on a topological space provide a good framework for studying these kind of data sets, since they permit the structure of the data to vary over the base space.  In most applications, it suffices to consider either \emph{constructible sheaves} or sheaves on a \emph{finite space} \cite{Curry_2013,Schurmann_2003}.  Traditionally, sheaves on a given topological space are studied using \emph{sheaf cohomology}, a global algebraic summary that respects the category of sheaves.  The cohomology of a sheaf includes a representation of the space of global sections of the sheaf.  Computing the cohomology is greatly simplified for constructible sheaves \cite{Shepard_1985} and can be done efficiently using discrete Morse theory \cite{curry2016discrete}.  Given this happy situation, sheaf cohomology has been useful in applications to network science \cite{nguemo2017sheaf,ghrist2011network, Robinson_qgtopo} and quantum information \cite{abramsky2015contextuality,abramsky2011sheaf}.  Sheaf cohomology contains information beyond the global sections, namely obstructions to the existence of other global sections, and this too can be useful in certain cases \cite{Robinson_ENTCS}.

Useful as it may be, sheaf cohomology cannot be defined if the stalks of a sheaf are merely sets, as it requires the stalks to have the structure of abelian groups \cite{Hubbard_2006}.  Unfortunately, in many applications, such as those listed in the previous section, this algebraic structure is simply unavailable.  However, for many systems, the stalks have geometric structure instead of algebraic structure.  Specifically, the distance between two points within a stalk can be measured with a \emph{pseudometric}.  It is therefore convenient and appropriate to consider sheaves of pseudometric spaces instead of sheaves of abelian groups.  In an earlier article \cite{Robinson_sheafcanon}, the author initiated the study of sheaves of pseudometric spaces, by showing how to encode a general model of a sensor system as a sheaf of pseudometric spaces, and how to encode the measurements made by such a system as an \emph{assignment} to that sheaf.  Under this framework, the \emph{consistency radius} can be defined, which bounds the distance between the assignment and the nearest global section of the sheaf.  This bound motivated an optimization algorithm to fuse potentially noisy or uncertain measurements made by the sensor system into a single globalized measurement, represented as a global section.

This article applies the idea of persistence --- using an ensemble of thresholds rather than one threshold --- to the consistency radius.  The result is the definition of a new mathematical object called the \emph{consistency filtration}.  The consistency filtration fits within the landscape of topological data analysis tools, since persistent \v{C}ech cohomology for point clouds is the consistency filtration for a particular sheaf and assignment (Example \ref{eg:point_clouds}), but it is considerably more general.  Indeed, the two main results proven in this article about the consistency filtration --- that it is both a covariant functor and a continuous function --- rely on a generalization of the interleaving distance \cite{Harker_2018} developed for use in persistent homology.

\section{Preliminaries}

This article involves the study of \emph{sheaves}, mathematical models of local consistency relationships between data.  Our primary focus is a \emph{sheaf of pseudometric spaces on a topological space}, which is a rather elaborate object.  It is easiest to develop the definition of this object slowly, by unpacking each term in turn, as is done in this section.

\begin{definition}
  \label{df:presheaf}
Suppose $(X,\col{T})$ is a topological space.  A \emph{presheaf $\shf{P}$ of sets on $(X,\col{T})$} consists of the following specification:
\begin{enumerate}
\item For each open set $U \in \col{T}$, a set $\shf{P}(U)$, called the \emph{stalk} at $U$,
\item For each pair of open sets $U \subseteq V$, there is a function $\shf{P}(U \subseteq V):\shf{P}(V)\to\shf{P}(U)$, called a \emph{restriction function} (or just a \emph{restriction}), such that
\item $\shf{P}(U \subseteq U)$ is the identity function and
\item For each triple $U \subseteq V \subseteq W$ of open sets, $\shf{P}(U \subseteq W) = \shf{P}(U \subseteq V) \circ \shf{P}(V \subseteq W)$.
\end{enumerate}
\end{definition}

Intuitively, a presheaf is a data structure for holding a variety of data items, localized to each open set.  The stalk $\shf{P}(U)$ at an open set $U$ specifies the possible values of a datum localized at $U$.  It is also the case that a presheaf $\shf{P}$ can be succinctly defined as ``a contravariant functor from the category of open sets of $(X,\col{T})$, with inclusion functions being morphisms, to the category $\cat{Set}$ of sets and functions.''

Recall that a \emph{finite topological space} is a topological space $(X,\col{T})$ in which $\col{T}$ is a finite set.  Finite topological spaces are automatically \emph{Alexandrov} spaces, which means that intersections of arbitrarily many open sets remains open.  For Alexandrov spaces, the \emph{star} of some subset $A \subseteq X$, given by
\begin{equation*}
  \st A := \cap\{U \in \col{T}: A \subseteq U\}
\end{equation*}
is the smallest open set that contains $A$.

\begin{remark}
Our usage of the word \emph{stalk} differs from the traditional usage, in which $\shf{P}(U)$ is called \emph{set of local sections}.  The traditional definition of a \emph{stalk} is a \emph{direct limit} of sets of the form $\shf{P}(U)$ such that each open set $U$ contains a given point $x\in X$.  The distinction between these two usages becomes primarily linguistic for finite topological spaces: the traditional definition deems $\shf{P}(U)$ a stalk at $x \in X$ whenever an open set $U$ is $\st \{x\}$, making $U$ minimal in the sense of inclusion.  If the topology is not finite, there may not necessarily be such a minimal open $U$ at each point, but the direct limit still exists.  Our usage here merely removes the minimality requirement since it happens to be unimportant in this article.
\end{remark}

The restrictions identify how to relate a datum on a large open set to one on a smaller open set.  When the smaller open set already has a value, this may or may not agree with the value propagated by a restriction function from a larger open set.  When these data agree is a special situation; the data are then called a \emph{section} as the next definition explains.

\begin{definition}
\label{df:section}
A \emph{global section} of a presheaf $\shf{P}$ of sets on a topological space $(X,\col{T})$ is an element $s$ of the direct product\footnote{Which is in general \emph{not} the direct sum, since $\col{T}$ may be infinite!} $\prod_{U \in \col{T}}\shf{P}(U)$ such that for all $U \subseteq V \in \col{T}$ then $\shf{P}(U \subseteq V)\left(s(V)\right) = s(U)$.  A \emph{local section} is defined similarly, but refers to a subcollection $\col{U}$ of $\col{T}$ instead of $\col{T}$.
\end{definition}

The set of global sections of a presheaf $\shf{P}$ may be quite different from the set $\shf{P}(X)$.  It is for this reason that an additional \emph{gluing axiom} can be included to remove this distinction.

\begin{definition}
\label{df:sheaf}
Let $\shf{S}$ be a presheaf of sets on a topological space $(X,\col{T})$.  We call $\shf{S}$ a \emph{sheaf of sets on $(X,\col{T})$} if for every open set $U \in \col{T}$ and every collection of open sets $\col{U}\subseteq \col{T}$ with $U = \cup \col{U}$, then $\shf{S}(U)$ is isomorphic to the space of local sections over the set of elements $\col{U}$.
\end{definition}

This article is concerned with more than mere agreement or disagreement; more can be said if the distance between two data can be measured.  The appropriate way to measure distance between two data is with a \emph{pseudometric}.

\begin{definition}
  \label{df:pseudometric}
  A \emph{pseudometric $d_X$} on a set $X$ is a function $d : X \times X \to \mathbb{R}$ that satisfies the following axioms:
  \begin{enumerate}
  \item $d_X(x,y) \ge 0$,
  \item $d_X(x,x) = 0$,
  \item $d_X(x,y)=d_X(y,x)$, and
  \item $d_X(x,z) \le d_X(x,y)+d_X(y,z)$ 
  \end{enumerate}
  for all $x, y, z \in X$.
  The pair $(X, d_X)$ is called a \emph{pseudometric space}.
\end{definition}

It is a standard fact that every pseudometric $d_X$ on $X$ induces a topology $\col{T}$ on $X$ generated by the \emph{open balls}
\begin{equation*}
  B_r(x) := \{ y \in X : d_X(x,y) < r\}
\end{equation*}
for each $x \in X$ and each $r > 0$.  We will usually abuse terminology slightly by regarding a pseudometric space as a topological space.
We can deem a function $f : X \to Y$ to be a \emph{continuous map} if $d_X$ is a pseudometric on $X$ and $d_Y$ is a pseudometric on $Y$, and if $f$ is a continuous function for the topologies induced by $d_X$ and $d_Y$.  We will write $f : (X,d_X) \to (Y,d_Y)$ to emphasize the situation where $f$ is a continuous map.

\begin{definition}
  \label{def:pseud}
  The category $\cat{Pseud}$ is the category of \emph{pseudometric spaces}, in which objects are pseudometric spaces and the morphisms are continuous maps.  
\end{definition}

In this article, it is often important to measure ``how continuous'' a continuous map is.  If there is a $K>0$ such that
\begin{equation*}
  d_Y(f(x),f(y)) \le K d_X(x,y)
\end{equation*}
for all $x,y \in X$, this $K$ is called a \emph{Lipschitz constant} for $f : (X,d_X) \to (Y,d_Y)$.  It is easy to demonstrate that the existence of a finite Lipschitz constant for a function $f:X \to Y$ between two pseudometric spaces $(X,d_X)$ and $(Y,d_Y)$ implies that $f$ is continuous.  Such a function is said to be \emph{Lipschitz continuous}.

With these tools in hand, namely sheaves of sets on a topological space and pseudometric spaces, we combine them into one concept that carries through the rest of the article.

\begin{definition}
  \label{def:sheaf_of_pseud}
  A \emph{sheaf $\shf{S}$ of psuedometric spaces on a topological space $(X,\col{T})$} consists of a sheaf of sets $\shf{S}$ on $(X,\col{T})$ and the choice of a pseudometric $d_U$ on $\shf{S}(U)$ for each open set $U\in \col{T}$, such that each restriction function is a continuous map.  We will call each restriction a \emph{restriction map} in this situation to emphasize that we are working with a sheaf of pseudometric spaces rather than a sheaf of sets. 
\end{definition}

Succinctly, a sheaf $\shf{S}$ of psuedometric spaces on a topological space is a kind of contravariant functor from the category of open sets in $(X,\col{T})$ to $\cat{Pseud}$.  With this interpretation in mind, we may simply think of each stalk $\shf{S}(U)$ as \emph{being} a pseudometric space $(\shf{S}(U),d_U)$, even though this may be considered a slight abuse of notation.

It is immediate from the definitions that each space of (local or global) sections of a sheaf of pseudometric spaces is itself a topological space.  In most of what follows, we will work with sheaves on a \emph{finite topological space} $(X,\col{T})$, one in which the topology $\col{T}$ is a finite set.  Each space of sections for a sheaf of pseudometric spaces on a finite topological space is then a pseudometric space as well, by the usual constructions of a pseudometric on a product space.  This idea leads to the next definition.

\begin{definition} \cite{Robinson_sheafcanon}.
  \label{def:assignment}
  For a sheaf $\shf{S}$ of sets on a topological space $(X,\col{T})$, an \emph{assignment} is an element $a \in \prod_{U\in \col{T}} \shf{S}(U)$.  If $\shf{S}$ is a sheaf of pseudometric spaces, then the set of assignments has the \emph{assignment pseudometric} given by 
  \begin{equation*}
    D(a,b) := \sup_{U \in \col{T}} d_U\left(a(U),b(U)\right).
  \end{equation*}
  For an assignment $a$ to a sheaf $\shf{S}$, each value 
  \begin{equation*}
    d_U\left(\left(\shf{S}(U \subseteq V)\right)a(V),a(U)\right)
  \end{equation*}
  where $U \subseteq V \in \col{T}$, is called a \emph{critical threshold}.  The \emph{consistency radius} given by
  \begin{equation*}
    c_{\shf{S}}(a) := \sup_{U \subseteq V \in \col{T}} d_U\left(\left(\shf{S}(U \subseteq V)\right)a(V),a(U)\right),
  \end{equation*}
  is the supremum of all critical thresholds.
\end{definition}

The central relationship between global sections of $\shf{S}$ and assignments is captured by the following bound.
\begin{proposition} \cite[Prop. 23]{Robinson_sheafcanon}
  For an assignment $a$ to a sheaf $\shf{S}$ of pseudometric spaces on $(X,\col{T})$ in which each restriction map of $\shf{S}$ is Lipschitz with constant $K$, then
  \begin{equation*}
    D(a,s) \ge \frac{c_{\shf{S}}(a)}{1+K}
  \end{equation*}
  for every global section $s$ of $\shf{S}$.
\end{proposition}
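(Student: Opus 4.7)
The plan is to prove the inequality by working pointwise on each pair $U \subseteq V$ appearing in the supremum defining $c_{\shf{S}}(a)$, and then taking the supremum at the end. For a fixed pair $U \subseteq V$, I would insert the intermediate point $\shf{S}(U \subseteq V)(s(V))$ into the distance $d_U\bigl(\shf{S}(U \subseteq V)(a(V)), a(U)\bigr)$ via the triangle inequality. This gives two terms: one comparing the restrictions $\shf{S}(U \subseteq V)$ applied to $a(V)$ and $s(V)$, and one comparing $\shf{S}(U \subseteq V)(s(V))$ to $a(U)$.

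Next I would simplify each term using the structure of $s$ and $\shf{S}$. Because $s$ is a global section, Definition \ref{df:section} gives $\shf{S}(U \subseteq V)(s(V)) = s(U)$, so the second term is simply $d_U(s(U), a(U))$, which is bounded above by the assignment pseudometric $D(a,s)$. For the first term, the Lipschitz hypothesis on the restriction map yields
\begin{equation*}
  d_U\bigl(\shf{S}(U \subseteq V)(a(V)), \shf{S}(U \subseteq V)(s(V))\bigr) \le K\, d_V(a(V), s(V)) \le K\, D(a,s).
\end{equation*}
Adding these together gives $d_U(\shf{S}(U \subseteq V)(a(V)), a(U)) \le (1+K)\, D(a,s)$ for every such pair.

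Finally, since the right-hand side does not depend on $U$ or $V$, I can take the supremum over all $U \subseteq V \in \col{T}$ on the left to obtain $c_{\shf{S}}(a) \le (1+K)\, D(a,s)$, which rearranges to the claimed bound. I do not anticipate any real obstacle: the argument is a two-term triangle inequality followed by one Lipschitz estimate and one application of the global section property, so the only thing to be careful about is keeping the roles of $U$ and $V$ straight and verifying that the bound $d_V(a(V), s(V)) \le D(a,s)$ is legitimate, which is immediate from the definition of $D$ as a supremum over all open sets.
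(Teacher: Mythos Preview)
Your argument is correct: the triangle inequality with the intermediate point $\shf{S}(U\subseteq V)(s(V))=s(U)$, followed by the Lipschitz bound on one term and the definition of $D$ on the other, gives exactly $c_{\shf{S}}(a)\le(1+K)D(a,s)$. Note, however, that the paper does not actually prove this proposition---it is quoted without proof from \cite[Prop.~23]{Robinson_sheafcanon}---so there is no in-paper argument to compare against; your proof is the standard one and is what one would expect the cited reference to contain.
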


\begin{remark}
Assignments thresholded to a certain level of consistency are global sections of a (different) sheaf on a subdivision of the base space \cite{pseudosections}.  While this perspective has theoretical merit, it is useful to study the extent to which assignments are consistent to a given threshold.  This perspective is introduced by Praggastis in \cite{Praggastis_2016}, where it is shown that assignments supported on the vertices of an abstract simplicial complex yield a cover of that simplicial complex when they are thresholded to a certain level of consistency.  We take a parallel approach in this article, but instead consider assignments supported on the entire space.  The most prominent difference is in the definition of maximal consistent cover (Definition \ref{def:consistent_cover} and Lemma \ref{lem:consistent_cover}), which is analogous to the cover constructed in \cite{Praggastis_2016}, but our cover is only over a subspace and is generally finer.
\end{remark}

\begin{remark}
  \label{rem:consistency_diameter}

  The consistency radius is a \emph{radius}, rather than a diameter, because the assignment itself acts like a central point to which values propagated along the restrictions are compared.

  We could instead define a quantity
    \begin{equation*}
    d_{\shf{S}}(a) := \sup_{U \subseteq V_1 \in \col{T},} \sup_{U \subseteq V_2 \in \col{T}} d_U\left(\left(\shf{S}(U \subseteq V_1)\right)a(V_1),\shf{S}(U \subseteq V_2)a(V_2)\right),
    \end{equation*}
    which is a \emph{diameter} in that
    \begin{equation*}
      c_{\shf{S}}(a) \le d_{\shf{S}}(a) \le 2 c_{\shf{S}}(a).
    \end{equation*}
    
  The left inequality arises simply by taking $U = V_2$ since $\shf{S}(U \subseteq U) = \id_{\shf{S}(U)}$ by definition.  The right inequality is a short calculation
  \begin{eqnarray*}
    d_{\shf{S}}(a) &=& \sup_{U \subseteq V_1 \in \col{T},} \sup_{U \subseteq V_2 \in \col{T}} d_U\left(\left(\shf{S}(U \subseteq V_1)\right)a(V_1),\shf{S}(U \subseteq V_2)a(V_2)\right)\\
    &\le&\sup_{U \subseteq V_1 \in \col{T}} d_U\left(\left(\shf{S}(U \subseteq V_1)\right)a(V_1),a(U)\right) + \sup_{U \subseteq V_2 \in \col{T}} d_U\left(a(U),\shf{S}(U \subseteq V_2)a(V_2)\right)\\
    &\le& 2 c_{\shf{S}}(a).
  \end{eqnarray*}
\end{remark}

\section{Sheaves paired with assignments}

This article discusses quite a few different categories, many of which are not extensively discussed in the literature.  For convenience, all of the categories under discussion are listed below, with their descriptions (and mnemonic hints in boldface):
\begin{enumerate}
\item $\cat{Pseud}$: the category of {\bf pseud}ometric spaces, Definition \ref{def:pseud},
\item $\cat{Shv}$: the category of all {\bf sh}ea{\bf v}es, Definition \ref{def:shv},
\item $\cat{ShvFP}$: the category of all {\bf sh}ea{\bf v}es on {\bf f}inite topological spaces of {\bf p}seudometric spaces, Definition \ref{def:shv},
\item $\cat{ShvA}$: the category of all {\bf sh}ea{\bf v}es paired with {\bf a}ssignments, Definition \ref{def:shva},
\item $\cat{ShvPA}(X,\col{T})$: the category of all {\bf sh}ea{\bf v}es of {\bf p}seudometric spaces on a fixed topological space $(X,\col{T})$, each paired an {\bf a}ssignment, Definition \ref{def:shva},
\item $\cat{ShvFPA}$: the category of all {\bf sh}ea{\bf v}es on arbitrary {\bf f}inite topological spaces of {\bf p}seudometric spaces, each paired an {\bf a}ssignment, Definition \ref{def:shva},
\item $\cat{ShvFPA}_L$: the subcategory of $\cat{ShvFPA}$ whose morphisms are sheaf morphisms along base space homeomorphisms whose component maps are {\bf L}ipschitz, Theorem \ref{thm:cf_functor},

\item $\cat{PartCovers}$: the category of {\bf part}ial {\bf covers}, that consist of topological spaces with collections of open sets, Definition \ref{def:ctop},
\item $\cat{CoarseFilt}$: the category of {\bf coarse}ning {\bf filt}rations of topological spaces, Definition \ref{def:sctop}, and
\item $\cat{PMod}$: the category of {\bf p}ersistence {\bf mod}ules, Definition \ref{def:pmod}.
\end{enumerate}

\begin{definition}
  \label{def:shv}
The category $\cat{Shv}$ is the category of \emph{all sheaves}, which each object is a sheaf on some topological space.  A morphism $m$ in this category from a sheaf $\shf{P}$ on $(X,\col{T}_X)$ to a sheaf $\shf{Q}$ on $(Y,\col{T}_Y)$ consists of (1) a continuous map $f:(X,\col{T}_X) \to (Y,\col{T}_Y)$ and (2) a set of \emph{component functions} $m_U$, one for each $U\in\col{T}_Y$, such that 
\begin{equation*}
    \xymatrix{
      \shf{P}(f^{-1}(U)) \ar[r]^-{m_U} &\shf{Q}(U)\\
      \shf{P}(f^{-1}(V)) \ar[r]_-{m_V} \ar[u]^{\shf{P}(f^{-1}(U) \subseteq f^{-1}(V))}&\shf{Q}(V) \ar[u]_{\shf{Q}(U \subseteq V)}\\
      }
\end{equation*}
commutes for each pair of open sets $U \subseteq V$ in $\col{T}_Y$.

Composition of morphisms in $\cat{Shv}$ is defined by composing both the continuous maps and the component functions.  Explicitly, if $n:\shf{Q}\to\shf{R}$ is a morphism from the sheaf $\shf{Q}$ defined above to a sheaf $\shf{R}$ on $(Z,\col{T}_Z)$ along the continuous map $g: (Y,\col{T}_Y) \to (Z, \col{T}_Z)$ with component functions $n_U$ for each $U \in \col{T}_Z$, then the composition $n \circ m$ is given by the following commutative diagram
\begin{equation*}
      \xymatrix{
      \shf{P}((g \circ f)^{-1}(U))) \ar[rr]^-{m_{g^{-1}(U)}}\ar@/^2pc/[rrrr]^{(n\circ m)_U} &&\shf{Q}(g^{-1}(U)) \ar[rr]^-{n_U} && \shf{R}(U)\\
      \shf{P}((g \circ f)^{-1}(V))) \ar[rr]_-{m_{g^{-1}(V)}} \ar@/_2pc/[rrrr]_{(n\circ m)_V}\ar[u]^{\shf{P}((g \circ f)^{-1}(U)) \subseteq (g \circ f)^{-1}(V)))}&&\shf{Q}(g^{-1}(V)) \ar[u]_{\shf{Q}(g^{-1}(U) \subseteq g^{-1}(V))} \ar[rr]_-{n_V} && \shf{R}(V) \ar[u]_{\shf{R}(U \subseteq V)} \\
      }
\end{equation*}

  The subcategory $\cat{ShvFP}$ is the category of sheaves of pseudometric spaces on finite topological spaces.  Because of Definition \ref{def:sheaf_of_pseud}, a sheaf of pseudometric spaces always has continuous restriction maps.  We will additionally assume that the component functions of every morphism of $\cat{ShvFP}$ are also continuous, and will emphasize this by calling them \emph{component maps}.  Under this convention, $\cat{ShvFP}$ is not a full subcategory of $\cat{Shv}$.
\end{definition}

\begin{remark}
  The category $\cat{Shv}$ is a generalization of the more traditional $\cat{Shv}(X,\col{T})$, which is the category of sheaves on a fixed topological space $(X,\col{T})$ in which each object is a sheaf on $(X,\col{T})$ and all morphisms are along the identity map $X\to X$.
\end{remark}

\begin{definition}
  \label{def:shva}
  Each object of the category $\cat{ShvA}$ of \emph{sheaf assignments} consists of a sheaf $\shf{S}$ on $(X,\col{T})$ and \emph{assignment} $a \in \prod_{U\in\col{T}} \shf{S}(U)$ to that sheaf.  In this category, morphisms consist of sheaf morphisms that are compatible with the assignment, in the following way.  Suppose that $a$ is an assignment to a sheaf $\shf{S}$ on a space $X$ and $b$ is an assignment to a sheaf $\shf{R}$ on a space $Y$.  A sheaf morphism $m: \shf{S} \to \shf{R}$ along a continuous map $f:X\to Y$ is morphism $(\shf{S},a) \to (\shf{R},b)$ in $\cat{ShvA}$ if for each open $U\subseteq Y$, the component function
  \begin{equation*}
    m_U : \shf{S}(f^{-1}(U)) \to \shf{R}(U)
  \end{equation*}
  satisfies
  \begin{equation*}
    m_U(a(f^{-1}(U))) = b(U).
  \end{equation*}

  We will also make use of the following subcategories:
  \begin{itemize}
  \item The subcategory $\cat{ShvFPA}$ of all sheaves on finite topological spaces of pseudometric spaces, each paired an assignment, and
  \item The subcategory $\cat{ShvPA}(X,\col{T})$ of all sheaves of pseudometric spaces on a fixed topological space $(X,\col{T})$, each paired an assignment.
  \end{itemize}
\end{definition}

\begin{proposition}
  There is a forgetful functor $\cat{ShvFPA}\to\cat{ShvFP}$, given by merely forgetting the assignment.
\end{proposition}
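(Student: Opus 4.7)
The plan is to show the map $F:\cat{ShvFPA}\to\cat{ShvFP}$ that sends $(\shf{S},a) \mapsto \shf{S}$ on objects and $m \mapsto m$ on morphisms (keeping the underlying sheaf morphism and discarding the compatibility condition with the assignment) is a well-defined functor. Since a morphism in $\cat{ShvFPA}$ is by definition (Definition \ref{def:shva}) a sheaf morphism between the underlying sheaves that happens to satisfy the extra compatibility condition $m_U(a(f^{-1}(U))) = b(U)$, the assignment $F(m) := m$ lands in $\cat{ShvFP}$ automatically, so no work is needed to verify that $F$ is well-defined on objects or on morphisms beyond noting these facts.

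Next I would verify functoriality. The identity morphism on $(\shf{S},a)$ in $\cat{ShvFPA}$ is the identity sheaf morphism $\id_{\shf{S}}$ on $\shf{S}$ (along the identity continuous map of the base space), which trivially satisfies $(\id_{\shf{S}})_U(a(U)) = a(U)$; applying $F$ to it yields $\id_{\shf{S}}$, which is precisely the identity on $F(\shf{S},a) = \shf{S}$ in $\cat{ShvFP}$. For composition, given $m:(\shf{S},a)\to(\shf{R},b)$ along $f:X\to Y$ and $n:(\shf{R},b)\to(\shf{T},c)$ along $g:Y\to Z$, Definition \ref{def:shv} specifies $(n\circ m)_V = n_V \circ m_{g^{-1}(V)}$. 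Since both sides of the equation $F(n\circ m) = F(n)\circ F(m)$ evaluate componentwise by the same formula, the equality is immediate.

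The only point worth checking with any care is that $\cat{ShvFPA}$ is actually a category, i.e.\ that the composition of two assignment-compatible morphisms is again assignment-compatible, so that $n\circ m$ genuinely lies in $\cat{ShvFPA}$. A short diagram chase handles this: compatibility of $m$ gives $m_{g^{-1}(V)}(a(f^{-1}(g^{-1}(V)))) = b(g^{-1}(V))$, and compatibility of $n$ gives $n_V(b(g^{-1}(V))) = c(V)$; composing and using $(g\circ f)^{-1}(V) = f^{-1}(g^{-1}(V))$ yields $(n\circ m)_V(a((g\circ f)^{-1}(V))) = c(V)$ as required.

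Since this verification is almost a tautology once Definitions \ref{def:shv} and \ref{def:shva} are unpacked, there is no real obstacle; the main ``work'' is simply organizing these observations. I would present the proof as three brief bullet points: (i) $F$ is well-defined, (ii) $F$ preserves identities, (iii) $F$ preserves composition, with the diagram chase above deferred to either the discussion following Definition \ref{def:shva} or to a parenthetical remark.
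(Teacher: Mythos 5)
Your proposal is correct and takes essentially the same approach as the paper, whose entire proof is the observation that, once the assignments are forgotten, the morphisms of $\cat{ShvFPA}$ are just sheaf morphisms; your version simply spells out the routine identity/composition checks (and the compatibility of composition with assignments) that the paper leaves implicit.
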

\begin{proof}
  Observe that without the assignments, the morphisms of $\cat{ShvFPA}$ are merely sheaf morphisms.
\end{proof}

\begin{proposition} 
  There is a functor $\cat{ShvFP}\to\cat{Pseud}$, given by replacing a sheaf by its space of assignments using the assignment pseudometric given in Definition \ref{def:assignment}.
\end{proposition}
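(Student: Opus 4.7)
The plan is to define the functor $F\colon \cat{ShvFP} \to \cat{Pseud}$ on objects by sending a sheaf $\shf{S}$ on $(X,\col{T}_X)$ to the pseudometric space $\bigl(A(\shf{S}),D\bigr)$, where $A(\shf{S}) := \prod_{U\in\col{T}_X}\shf{S}(U)$ and $D$ is the assignment pseudometric from Definition \ref{def:assignment}. On a morphism $m\colon \shf{S}\to\shf{R}$ along $f\colon X\to Y$, with component maps $m_U\colon \shf{S}(f^{-1}(U))\to\shf{R}(U)$, I would define
\begin{equation*}
\bigl(F(m)(a)\bigr)(U) := m_U\bigl(a(f^{-1}(U))\bigr)
\end{equation*}
for each $U \in \col{T}_Y$. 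Since no compatibility is demanded of an assignment beyond lying in the product, $F(m)(a)$ is automatically an assignment to $\shf{R}$, so well-definedness on objects and morphisms is immediate.

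Next I would verify the two properties that make $F(m)$ a morphism in $\cat{Pseud}$. That $D$ satisfies the pseudometric axioms is a direct transfer of each axiom for $d_U$ through the supremum, using that $\col{T}_X$ is finite so the supremum is always finite. For continuity of $F(m)$, the finiteness of $\col{T}_Y$ means the assignment pseudometric on $A(\shf{R})$ generates the product topology, so continuity of $F(m)$ reduces to continuity of each coordinate $a\mapsto m_U(a(f^{-1}(U)))$. Each coordinate factors as the projection $\pi_{f^{-1}(U)}\colon A(\shf{S})\to \shf{S}(f^{-1}(U))$, which is $1$-Lipschitz with respect to $D$ and $d_{f^{-1}(U)}$, followed by the component map $m_U$, which is continuous by the $\cat{ShvFP}$ convention on morphisms. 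Hence $F(m)$ is continuous.

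Finally, functoriality follows by unwinding definitions. The identity morphism on $\shf{S}$ has $f = \mathrm{id}_X$ and $(\mathrm{id})_U = \mathrm{id}_{\shf{S}(U)}$, so $F(\mathrm{id}_{\shf{S}})(a)(U) = a(U)$. For composition, if $n\colon \shf{R}\to\shf{Q}$ is along $g$, then the commutative square in Definition \ref{def:shv} gives $(n\circ m)_U = n_U\circ m_{g^{-1}(U)}$, and a direct computation yields
\begin{equation*}
F(n\circ m)(a)(U) = n_U\bigl(m_{g^{-1}(U)}(a((g\circ f)^{-1}(U)))\bigr) = F(n)\bigl(F(m)(a)\bigr)(U).
\end{equation*}
There is no real obstacle here; the only subtle point is the continuity verification, which uses finiteness of the base topology in an essential way to identify the assignment pseudometric topology with the product topology and thereby reduce continuity to the coordinatewise statement.
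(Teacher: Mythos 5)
Your proposal is correct and takes essentially the same route as the paper: the same definition of $F$ on objects and on morphisms via $\bigl(F(m)(a)\bigr)(V) = m_V\bigl(a(f^{-1}(V))\bigr)$, the same direct computation for composition, and a continuity argument that uses finiteness of $\col{T}_Y$ in the same essential way. The only difference is presentational: you identify the assignment-pseudometric topology with the product topology and check coordinatewise continuity via $1$-Lipschitz projections, whereas the paper runs an explicit $\epsilon$--$\delta$ argument taking $\delta := \inf_{V\in\col{T}_Y}\delta_V > 0$; these are interchangeable.
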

This proposition is not true for sheaves over infinite spaces.  Perhaps surprisingly, requiring compactness of the base spaces is not sufficient to resolve the difficulty because assignments can lack all internal consistency.
\begin{proof}
  Sheaf morphisms were defined to support this!  Specifically, suppose that $m:\shf{S} \to \shf{R}$ is a morphism of sheaves of pseudometric spaces along the continuous map $f:(X,\col{T}_X) \to (Y,\col{T}_Y)$ between topological spaces, so that it is a morphism in $\cat{ShvFP}$.  Define a function $Fm: \prod_{U \in \col{T}_X} \shf{S}(U) \to \prod_{V \in \col{T}_Y} \shf{R}(V)$ between spaces of assignments for the two sheaves by its action on an arbitrary assignment $a$ to $\shf{S}$
  \begin{equation*}
    \left(Fm(a)\right)(V) = m_V(a(f^{-1}(V))),
  \end{equation*}
  for each $V \in \col{T}_Y$, which should be read as defining an assignment to $\shf{R}$.
  \begin{description}
  \item[$Fm$ is continuous.] Let $\epsilon > 0$ and an assignment $a\in\prod_{U \in \col{T}_X} \shf{S}(U)$ be given.
    Observe that since each component map $m_V$ of the sheaf morphism is continuous according to the convention established in Definition \ref{def:shv}
    for each open $V \in \col{T}_Y$, there is a $\delta_V > 0$ such that for every $x \in \shf{S}(f^{-1}(V))$ with
    \begin{equation*}
      d_{f^{-1}(V)}(x,a(f^{-1}(V))) < \delta_V,
    \end{equation*}
    it follows that
    \begin{equation*}
      d_V(m_V(x),Fm(a)_V) < \epsilon.
    \end{equation*}
    Since objects in $\cat{ShvFP}$ consist of sheaves over finite spaces, the infimum
    \begin{equation*}
      \delta := \inf_{V \in \col{T}_Y} \delta_V
    \end{equation*}
    is strictly positive.  (If $\col{T}_Y$ is infinite, this may be false.) Thus if $b$ is an assignment to $\shf{S}$ with $D(a,b) < \delta$, then $D(Fm(a),Fm(b)) < \epsilon$.  As an aside, observe that $D(a,b) < \delta$ is stronger than necessary, since this contrains all stalks of $\shf{S}$ not just those that are preimages of $\col{T}_Y$-open sets.
    
  \item[This defines a functor.] Let $n:\shf{R} \to \shf{P}$ be another morphism in $\cat{ShvFP}$, this one along a continuous map $g:(Y,\col{T}_Y) \to (Z,\col{T}_Z)$.  This implies that for an open set $V \in \col{T}_Z$, $g^{-1}(V)$ and $(g \circ f)^{-1}(V)$ are also both open.  We then compute
    \begin{eqnarray*}
      \left(F(n\circ m)(a)\right)(V) &=& (n\circ m)_V(a((g\circ f)^{-1}(V)))\\
      &=& (n\circ m)_V(a(f^{-1}\circ g^{-1}(V)))\\
      &=& (n\circ m)_V(a(f^{-1} ( g^{-1}(V))))\\
      &=& n_{V} ( m_{g^{-1}V}(a(f^{-1} ( g^{-1}(V)))))\\
      &=& n_{V} ( (Fm(a))(g^{-1}(V)) ) )\\
      &=& \left(Fn (Fm(a))\right)(V).
    \end{eqnarray*}
  \end{description}
\end{proof}

\begin{proposition}
  \label{prop:shva_sections}
  In $\cat{ShvA}$, any morphism whose domain $(\shf{S},s)$ is a sheaf $\shf{S}$ and a global section $s$ of $\shf{S}$ will have a global section as its codomain.
\end{proposition}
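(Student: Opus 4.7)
The plan is to unwind the three defining conditions in play — the commuting square for a sheaf morphism (Definition \ref{def:shv}), the compatibility condition $m_U(s(f^{-1}(U))) = b(U)$ for a $\cat{ShvA}$-morphism (Definition \ref{def:shva}), and the global section property of $s$ (Definition \ref{df:section}) — and show they combine to force the codomain assignment $b$ to satisfy the global section property as well.

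Concretely, fix a morphism $(f,m):(\shf{S},s)\to(\shf{R},b)$ in $\cat{ShvA}$, where $\shf{S}$ lives on $(X,\col{T}_X)$, $\shf{R}$ lives on $(Y,\col{T}_Y)$, and $f:X\to Y$ is the underlying continuous map. Let $U\subseteq V$ be any pair of open sets in $\col{T}_Y$. Since $f$ is continuous, $f^{-1}(U)\subseteq f^{-1}(V)$ are open in $\col{T}_X$, so the restriction map $\shf{S}(f^{-1}(U)\subseteq f^{-1}(V))$ is defined. I would then trace $s(f^{-1}(V))$ around the commuting square: applying $m_V$ then $\shf{R}(U\subseteq V)$ gives $\shf{R}(U\subseteq V)(b(V))$ using the $\cat{ShvA}$ compatibility at $V$, while applying the restriction in $\shf{S}$ first and then $m_U$ gives $m_U(s(f^{-1}(U)))$, which equals $b(U)$ by the compatibility at $U$ — provided the restriction in $\shf{S}$ sends $s(f^{-1}(V))$ to $s(f^{-1}(U))$. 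That last implication is exactly the statement that $s$ is a global section, applied to the inclusion $f^{-1}(U)\subseteq f^{-1}(V)$.

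Chaining these equalities yields $\shf{R}(U\subseteq V)(b(V)) = b(U)$ for every inclusion $U\subseteq V$ in $\col{T}_Y$, which is precisely the definition of $b$ being a global section of $\shf{R}$. There is no real obstacle here: the only subtle point is remembering that the compatibility condition in $\cat{ShvA}$ is stated only on preimages of open sets in $\col{T}_Y$, but that is all the argument ever uses, since the commuting square itself is indexed by open sets of $\col{T}_Y$. No finiteness or pseudometric structure is needed, so the result holds at the full generality of $\cat{ShvA}$.
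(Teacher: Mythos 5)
Your proof is correct and is essentially the paper's argument: the paper simply cites the standard fact that images of global sections through sheaf morphisms are global sections, and your diagram chase (compatibility $m_U(s(f^{-1}(U)))=b(U)$, the commuting square, and the section property of $s$ on $f^{-1}(U)\subseteq f^{-1}(V)$) is precisely the proof of that fact. Nothing is missing; you have just made explicit what the paper leaves as "immediate."
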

\begin{proof}
  This follows immediately from the fact that images of global sections through sheaf morphisms are global sections.
\end{proof}

\begin{figure}
  \begin{center}
    \includegraphics[width=2in]{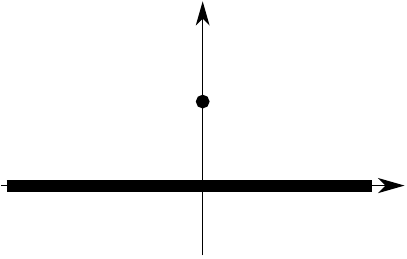}
    \caption{The subspace $(X,d)$ of $\mathbb{R}^2$ in Example \ref{eg:homeo_intransitive}}
    \label{fig:homeo_intransitive}
  \end{center}
\end{figure}

Global sections are sometimes --- but not always --- isomorphic objects in $\cat{ShvFPA}$ as the following example shows.
\begin{example}
  \label{eg:homeo_intransitive}
  Consider the metric space $(X,d)$ that is a subspace of $\mathbb{R}^2$ with the usual metric $d$, with
  \begin{equation*}
    X := \{(x,0) : x \in \mathbb{R}\} \cup \{(0,1)\}
  \end{equation*}
  as shown in Figure \ref{fig:homeo_intransitive}.  The action of the group of homeomorphisms on $(X,d)$ is not transitive because every homeomorphism of $(X,d)$ must leave the point $(0,1)$ fixed.

  Now consider the sheaf $\shf{S}$ on the single-point space $\{*\}$ (with the only possible topology for that space) with $\shf{S}(\{*\}) = (X,d)$.  Both the space of global sections of $\shf{S}$ and the space of assignments of $\shf{S}$ are therefore precisely the points of $(X,d)$.  Any isomorphism $\shf{S}\to\shf{R}$ in $\cat{Shv}$ or $\cat{ShvPA}$ will induce a homeomorphism on $(X,d)$.  Thus $(\shf{S},a)$ and $(\shf{R},b)$ are isomorphic if and only if $a$ and $b$ are elements of the same connected component of $(X,d)$.
\end{example}

\begin{proposition}
  \label{prop:shvpa_topology}
For any topological space $(X,\col{T})$, each isomorphism class of $\cat{ShvPA}(X,\col{T})$ can be given a topology that is a subspace of the product of the assignment pseudometric and copies of the topology of uniform convergence (one copy for each restriction).
\end{proposition}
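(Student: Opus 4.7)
The strategy is to realize each isomorphism class as a subspace of a product of well-understood topological spaces, thereby inheriting a subspace topology. Fix a representative $(\shf{S}_0, a_0)$ of the class under consideration; any other $(\shf{R}, b)$ in the class admits an isomorphism $m: \shf{S}_0 \to \shf{R}$ in $\cat{ShvPA}(X,\col{T})$, whose components $m_U:\shf{S}_0(U)\to\shf{R}(U)$ are homeomorphisms of pseudometric spaces. Thus, after choosing such an $m$, the data $(\shf{R},b)$ can be transported back to data on the stalks of $\shf{S}_0$: the transported assignment is $U\mapsto m_U^{-1}(b(U))$, and the transported restriction along $U\subseteq V$ is $m_U^{-1}\circ\shf{R}(U\subseteq V)\circ m_V$, each of which lies in a space intrinsic to $\shf{S}_0$.

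Next I would construct the ambient space
\[
\mathcal{E} := \Bigl(\prod_{U\in\col{T}}\shf{S}_0(U)\Bigr) \;\times\; \prod_{U\subseteq V \in \col{T}} C\bigl(\shf{S}_0(V),\shf{S}_0(U)\bigr),
\]
topologizing the first factor with the assignment pseudometric of Definition \ref{def:assignment}, each $C(\cdot,\cdot)$ factor with the topology of uniform convergence, and $\mathcal{E}$ itself with the product topology. The transport above defines a map $\iota$ from the isomorphism class into $\mathcal{E}$ whose image lies inside the subset cut out by the sheaf axioms (identity restrictions, functorial composition, and gluing) — a subset that is a subspace of $\mathcal{E}$ by a routine verification that each axiom is a closed condition with respect to uniform convergence. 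I would then equip the isomorphism class with the initial topology along $\iota$, which is precisely the subspace topology inherited from $\mathcal{E}$.

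The main obstacle is that $\iota$ depends on the choice of isomorphism $m$, so the proof must check that the resulting topology is independent of this choice. Any two valid choices of $m$ differ by an automorphism $\varphi$ of $(\shf{S}_0, a_0)$, and the corresponding self-map of $\mathcal{E}$ obtained by coordinate-wise conjugation with the components of $\varphi$ rearranges the transported data of any $(\shf{R},b)$. Well-definedness therefore reduces to showing this induced self-map of $\mathcal{E}$ is a homeomorphism; that, in turn, comes down to the standard fact that pre- or post-composition by a fixed continuous map between pseudometric spaces is continuous on $C(\cdot,\cdot)$ in the uniform convergence topology, together with the elementary observation that composition with a fixed homeomorphism is an isometry-like operation on the assignment-pseudometric factor. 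Once that invariance is in place, the subspace topology descends unambiguously to the isomorphism class.
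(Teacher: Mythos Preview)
Your approach is essentially the same as the paper's: realize the isomorphism class as a subset of
\[
\prod_{U\in\col{T}}\shf{S}_0(U)\;\times\;\prod_{U\subseteq V\in\col{T}}C\bigl(\shf{S}_0(V),\shf{S}_0(U)\bigr)
\]
via transport along isomorphisms and give it the subspace topology. The paper's proof is terser and simply asserts that isomorphic sheaves ``have stalks that are isomorphic to those of $\shf{S}$ but possibly different restriction maps,'' leaving the transport implicit; you spell this out more carefully and also attempt to show the resulting topology is independent of the choices of isomorphisms, which the paper does not address.

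One caution about your well-definedness argument: you write that ``any two valid choices of $m$ differ by an automorphism $\varphi$ of $(\shf{S}_0,a_0)$,'' but $m$ is chosen separately for each $(\shf{R},b)$ in the class, so the discrepancy between two systems of choices is a function from the class into the automorphism group, not a single $\varphi$. Consequently the induced self-map of $\mathcal{E}$ is not globally given by conjugation with one fixed automorphism, and your continuity argument for it does not go through as written. This is not fatal, since the proposition only asserts that \emph{some} subspace topology exists; the paper does not claim canonicity either. If you wish to salvage well-definedness, one route is to work instead with the quotient of the image of $\iota$ by the automorphism group action, or simply to note that for the downstream use (continuity of consistency radius), any such choice suffices.
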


Recall that the \emph{topology of uniform convergence} for two pseudeometric spaces $(X,d_X)$, $(Y,d_Y)$ provides a pseudometric $D$ on $C(X,Y)$ given by
\begin{equation*}
  D(f,g) := \sup_{x\in X} d_Y(f(x),g(x)).
\end{equation*}

\begin{proof}
  Fix an isomorphism class of $\cat{ShvPA}(X,\col{T})$ by specifying a representative sheaf $\shf{S}$ of pseudometric spaces and a representative assignment $a$ to $\shf{S}$.  Every sheaf isomorphic to $\shf{S}$ has stalks that are isomorphic to those of $\shf{S}$ but possibly different restriction maps.  Therefore, the space of assignments for every sheaf isomorphic to $\shf{S}$ is the same.  Given these two facts, the sheaves isomorphic to $\shf{S}$ are parameterized by appropriate choices of restriction maps from the product
  \begin{equation*}
    \prod_{U \subseteq V \in \col{T}} C(\shf{S}(V),\shf{S}(U)),
  \end{equation*}
  in which each factor is given the topology of uniform convergence.

  Thus the isomorphism class of $(\shf{S},a)$ in $\cat{ShvPA}(X,\col{T})$ is merely a subset of the product
  \begin{equation*}
    \prod_{U \subseteq V \in \col{T}} C(\shf{S}(V),\shf{S}(U)) \times \prod_{U \in \col{T}} \shf{S}(U),
  \end{equation*}
  in which the second factor in the product has the topology induced by the assignment pseudometric.
\end{proof}

\begin{corollary}
  \label{cor:shvpa_is_a_space}
  For any topological space $(X,\col{T})$, $\cat{ShvPA}(X,\col{T})$ can be made into topological space by simply taking the disjoint union of all the isomorphism classes.
\end{corollary}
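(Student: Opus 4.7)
The plan is to combine Proposition \ref{prop:shvpa_topology} with the standard coproduct construction in the category of topological spaces. First, I would observe that the objects of $\cat{ShvPA}(X,\col{T})$ are partitioned into isomorphism classes, since ``isomorphic to'' is an equivalence relation in any category. By the previous proposition, each isomorphism class has already been equipped with a topology, realized concretely as a subspace of the product of mapping spaces (in the uniform convergence topology) with the assignment pseudometric space.

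Second, I would invoke the standard fact that for any family $\{(Y_\alpha,\col{T}_\alpha)\}_{\alpha\in A}$ of topological spaces, the disjoint union $\bigsqcup_{\alpha} Y_\alpha$ carries a \emph{coproduct topology} in which $W$ is open precisely when $W\cap Y_\alpha$ is open in $Y_\alpha$ for every $\alpha$. Verifying the topology axioms is routine: arbitrary unions and finite intersections commute with intersecting against each $Y_\alpha$, so openness is preserved. Applying this construction to the family indexed by the isomorphism classes of $\cat{ShvPA}(X,\col{T})$ directly produces a topology on the underlying collection of objects of the category.

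The only step that deserves a moment of care is set-theoretic hygiene: a priori the collection of isomorphism classes might be a proper class rather than a set. This is not really an obstacle, however, because the proof of Proposition \ref{prop:shvpa_topology} already parameterizes each class using a chosen representative $(\shf{S},a)$ and fixed stalk sets, so one can form the disjoint union by indexing over a chosen set of representatives (or simply accept a large topological space in the sense of Grothendieck universes). Since no genuine obstruction arises, the corollary follows essentially by citation: Proposition \ref{prop:shvpa_topology} supplies the topology on each piece, and the coproduct in $\cat{Top}$ glues them together.
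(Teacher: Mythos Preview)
Your proposal is correct and matches the paper's approach: the paper states this corollary without proof, treating it as an immediate consequence of Proposition~\ref{prop:shvpa_topology} together with the disjoint union construction, which is exactly what you spell out. Your added remarks on the coproduct topology axioms and set-theoretic size are reasonable elaborations, but the paper does not dwell on either point.
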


Observe that each isomorphism class is $\cat{ShvPA}(X,\col{T})$ is actually a closed subspace of the product
  \begin{equation*}
    \prod_{U \subseteq V \in \col{T}} C(\shf{S}(V),\shf{S}(U)) \times \prod_{U \in \col{T}} \shf{S}(U),
  \end{equation*}
because the commutativity and gluing axioms will prohibit many choices of restriction maps.  This means that the topology of $\cat{ShvPA}(X,\col{T})$ will likely be quite complicated.

\begin{theorem}
  \label{thm:consistency_radius_continuous}
If $(X,\col{T})$ is a finite topological space, then consistency radius is a continuous function $\cat{ShvPA}(X,\col{T})\to \mathbb{R}$.
\end{theorem}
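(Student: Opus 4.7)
The plan is to exploit the finiteness of $\col{T}$ to reduce the supremum in the definition of $c_\shf{S}(a)$ to a finite maximum, then show each summand is continuous in the product topology described by Proposition \ref{prop:shvpa_topology}. Because $\cat{ShvPA}(X,\col{T})$ is topologized as a disjoint union of isomorphism classes (Corollary \ref{cor:shvpa_is_a_space}), it suffices to verify continuity on each isomorphism class separately, and within one such class a point is parameterized by a tuple
\begin{equation*}
  \bigl( (r_{UV})_{U \subseteq V},\; (a(U))_{U\in\col{T}} \bigr) \in \prod_{U \subseteq V \in \col{T}} C(\shf{S}(V),\shf{S}(U)) \times \prod_{U\in\col{T}} \shf{S}(U),
\end{equation*}
with each factor $C(\shf{S}(V),\shf{S}(U))$ carrying the topology of uniform convergence.

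Since $\col{T}$ is finite, there are only finitely many pairs $U\subseteq V$, so $c_\shf{S}(a)$ is a maximum of finitely many expressions of the form $\phi_{U,V}(r_{UV},a(V),a(U)) := d_U(r_{UV}(a(V)),a(U))$. The maximum of finitely many continuous real-valued functions is continuous, so the problem reduces to showing each $\phi_{U,V}$ is continuous jointly in its three arguments. The pseudometric $d_U:\shf{S}(U)\times\shf{S}(U)\to\mathbb{R}$ is $1$-Lipschitz in each argument by the triangle inequality, hence continuous, so everything rests on showing that the evaluation map $\mathrm{ev}:C(\shf{S}(V),\shf{S}(U))\times\shf{S}(V)\to\shf{S}(U)$ defined by $\mathrm{ev}(r,x) = r(x)$ is continuous when the function space carries the uniform topology.

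This is the main technical step, and it is a standard estimate: given a reference pair $(r_0, x_0)$ and $\epsilon>0$, I will bound
\begin{equation*}
  d_U(r(x), r_0(x_0)) \le d_U(r(x), r_0(x)) + d_U(r_0(x), r_0(x_0)) \le D(r,r_0) + d_U(r_0(x), r_0(x_0)),
\end{equation*}
where $D$ denotes the uniform pseudometric on $C(\shf{S}(V),\shf{S}(U))$. The first term is controlled by taking $r$ uniformly close to $r_0$, and the second is controlled by continuity of the fixed map $r_0$ at $x_0$ (here it is essential that $r_0$ is an actual restriction map of a sheaf of pseudometric spaces, hence continuous by Definition \ref{def:sheaf_of_pseud}). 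Choosing $D(r,r_0) < \epsilon/2$ and $d_V(x,x_0)$ small enough that $d_U(r_0(x),r_0(x_0))<\epsilon/2$ yields continuity of $\mathrm{ev}$ at $(r_0,x_0)$.

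The only obstacle worth flagging is that the argument genuinely needs $\col{T}$ finite: if $\col{T}$ were infinite, the supremum defining $c_\shf{S}(a)$ would range over infinitely many continuous functions and could fail to be continuous even pointwise, and moreover the infimum of $\delta_V$'s used to move from pointwise to joint control would generally collapse to zero. Once that limitation is observed, the three ingredients — reduction to a finite max via finiteness of $\col{T}$, continuity of $\mathrm{ev}$ from uniform convergence plus continuity of the fixed restriction map, and continuity of $d_U$ — assemble into the theorem with no further work.
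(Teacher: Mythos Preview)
Your proposal is correct. It is organized differently from the paper's main proof but rests on the same estimates. The paper bounds $|c_{\shf{R}}(b)-c_{\shf{S}}(a)|$ directly by a sum of three terms---one controlled by the assignment pseudometric, one by continuity of the fixed restrictions $\shf{S}(U\subseteq V)$, and one by uniform closeness of $\shf{R}(U\subseteq V)$ to $\shf{S}(U\subseteq V)$---and then builds an explicit open neighborhood $Q_1\cap Q_2\cap Q_3$ making each term small. Your route instead factors $c$ as a finite maximum of maps $\phi_{U,V}$ and verifies continuity compositionally via the evaluation map; your inequality $d_U(r(x),r_0(x_0))\le D(r,r_0)+d_U(r_0(x),r_0(x_0))$ is exactly the content of the paper's second and third terms. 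Indeed, the Remark immediately following the paper's proof records precisely your factorization $\cat{ShvPA}(X,\col{T})\to\mathbb{R}^n\to\mathbb{R}$. Your packaging is a bit cleaner and makes the role of finiteness transparent (finite max of continuous functions); the paper's version is more explicit about \emph{where} finiteness is consumed (the intersection defining $Q_2$ is finite).
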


This function is continuous in the assignment metric for assignments to a single sheaf \cite{Robinson_sheafcanon}.  But it is also continuous in the compact open topology for sheaves for a fixed assignment.  Finiteness of $\col{T}$ is not essential, but makes for considerably less delicate argumentation.  

\begin{proof}
  Without loss of generality, we restrict attention to an isomorphism class of $\cat{ShvPA}(X,\col{T})$.  From the proof of Proposition \ref{prop:shvpa_topology}, the elements of this isomorphism class are a subspace of
  \begin{equation*}
    \prod_{U \subseteq V \in \col{T}} C(\shf{S}(V),\shf{S}(U)) \times \prod_{U \in \col{T}} \shf{S}(U).
  \end{equation*}

  Before we proceed with the main argument, suppose that $a$ is an assignment to a sheaf $\shf{S}$, that $\shf{R}$ is a sheaf isomorphic to $\shf{S}$, and that $b$ is an assignment to $\shf{R}$.  Recalling that
  \begin{equation*}
    \left| \| x \| - \|  y \| \right| \le \| x-y \|
  \end{equation*}
  for any norm $\| \cdot \|$, we have that
  \begin{eqnarray*}
    | c_{\shf{R}}(b) - c_{\shf{S}}(a) | & = & \left | \sup_{U_1 \subseteq V_1 \in \col{T}} d_{U_1}\left(\left(\shf{R}(U_1 \subseteq V_1)\right)b(V_1),b(U_1)\right) \right. \\&& \left.- \sup_{U_2 \subseteq V_2 \in \col{T}} d_{U_2}\left(\left(\shf{S}(U_2 \subseteq V_2)\right)a(V_2),a(U_2)\right) \right|\\
    & \le & \sup_{U \subseteq V \in \col{T}} \left | d_{U}\left(\left(\shf{R}(U \subseteq V)\right)b(V),b(U)\right) - d_{U}\left(\left(\shf{S}(U \subseteq V)\right)a(V),a(U)\right) \right|\\
    & \le & \sup_{U \subseteq V \in \col{T}} \left | d_{U}\left(\left(\shf{R}(U \subseteq V)\right)b(V),b(U)\right) - d_{U}\left(\left(\shf{S}(U \subseteq V)\right)a(V),a(U)\right) \right.\\
    &&+d_U\left(\shf{S}(U \subseteq V)b(V),b(U)\right)-d_U\left(\shf{S}(U \subseteq V)b(V),b(U)\right)\\
    &&\left.+d_U\left(\shf{S}(U \subseteq V)a(V),b(U)\right)-d_U\left(\shf{S}(U \subseteq V)a(V),b(U)\right)\right|\\
    &\le& \sup_{U \in \col{T}} d_U(a(U),b(U)) \\&& +  \sup_{U \subseteq V \in \col{T}} d_U(\shf{S}(U \subseteq V)b(V), \shf{S}(U \subseteq V)a(V))\\&& + \sup_{U \subseteq V \in \col{T}} d_U(\shf{R}(U \subseteq V)b(V), \shf{S}(U \subseteq V) b(V).\\
  \end{eqnarray*}
  
  Returning to the task at hand, suppose that $a$ is an assignment to a sheaf $\shf{S}$ and $\epsilon>0$ is given.  We show that there is an open set $Q \subseteq \cat{ShvPA}(X,\col{T})$ containing $(\shf{S},a)$ whose consistency radii all lie in the interval
  \begin{equation*}
    (c_{\shf{S}}(a) - \epsilon, c_{\shf{S}}(a) + \epsilon) \subseteq \mathbb{R}.
  \end{equation*}
  To this end, we construct $Q$ as the intersection of three open subsets $Q_1$, $Q_2$, $Q_3$ of $\cat{ShvPA}(X,\col{T})$.  The set $Q_i$ consists of the $(\shf{R},b)$ for which the $i$-th term in the last expression above is bounded by $\epsilon/3$, as follows:

\begin{itemize}
\item The first term merely requires that the distance from $a$ to $b$ in the assignment metric be not more than $\epsilon/3$; hence $b$ lies in an open subset of the space of assignments
  \begin{equation*}
    Q_1 := \{(\shf{R},b) : D(a,b) < \epsilon/3 \}.
  \end{equation*}
  
\item Since the restriction maps of $\shf{S}$ are continuous, bounding the second term by $\epsilon/3$ requires that $b$ lie within the intersection of finitely many open sets in the space of assignments
\begin{equation*}
    Q_2 := \left\{(\shf{R},b) : b(V) \in \left(\shf{S}(U \subseteq V)\right)^{-1}\left(B_{\epsilon/3}(\shf{S}(U \subseteq V)a(V))\right) \text{ for all } U\subseteq V \in \col{T} \right\},
\end{equation*}
where $B_{\epsilon/3}(x)$ is the open ball of radius $\epsilon/3$ centered at $x$.  (If the topology $\col{T}$ is not finite, this can fail since there may be infinitely open subsets $U$ of a given open set $V$.  All of these need to be intersected in the space of assignments for the factor corresponding to the stalk $\shf{S}(V)$.)
  
\item Bounding the last term above by $\epsilon/3$ requires that $\shf{R}(U\subseteq V)$ lies in an open set in the topology of uniform convergence
  \begin{equation*}
    Q_3 := \left\{(\shf{R},b) : \shf{R}(U\subseteq V) \in B_{\epsilon/3}\left(\shf{S}(U \subseteq V)\right) \text{ for all } U\subseteq V \in \col{T}\right\}.
  \end{equation*}
\end{itemize}

Thus, if $(\shf{R},b) \in Q = Q_1 \cap Q_2 \cap Q_3$, then $|c_{\shf{S}}(a) - c_{\shf{R}}(b)| < \epsilon$, which completes the argument.

\end{proof}

\begin{remark}
  Assuming the topology $\col{T}$ on a space $(X,\col{T})$ is finite, consistency radius factors into two continuous maps,
  \begin{equation*}
    \xymatrix{
      \cat{ShvPA}(X,\col{T}) \ar[r]^(0.7){F} & \mathbb{R}^n \ar[r]^N & \mathbb{R} 
    }
  \end{equation*}
  in which $n$ is the number of open set inclusions $U \subseteq V$ in the topology $\col{T}$, the second map is the infinity norm $N(v) = \|v\|_\infty$ and the first is given by
  \begin{equation*}
    F(\shf{S},a) := \left(d_U( \shf{S}(U \subseteq V) a(V), a(U)) \right)_{U \subseteq V \in \col{T}}.
  \end{equation*}
  
  Because of the equivalence of topologies of all finite-dimensional normed spaces, consistency radius is still continuous if we instead were to define the consistency radius using another norm as the second map $N$.  Of particular interest is the $2$-norm, resulting in a different definition for the consistency radius
  \begin{equation}
    \label{eq:consistency_radius_L2}
    c_{\shf{S}}(a) := \sqrt{\sum_{U \subseteq V \in \col{T}} d_U\left(\left(\shf{S}(U \subseteq V)\right)a(V),a(U)\right)^2}.
  \end{equation}
  Proving bounds (such as appear in the proofs of Theorem \ref{thm:consistency_radius_continuous}, Lemma \ref{lem:cr_morphism}, and Theorem \ref{thm:cf_robustness}) with this definition is somewhat more cumbersome, but it preserves the smooth structure if the sheaves take values in the category of Riemannian manifolds.
\end{remark}

 \section{Coarsening filtrations}

 In this section, we establish properties collections of nested collections of open sets in a topological space.  The most prominent of these is the \emph{consistency filtration} defined in Definition \ref{def:consistency_filtration}.

 \begin{lemma} (Standard)
   \label{lem:cech_refinement}
   If $\col{V}$ and $\col{U}$ are open covers of a topological space $(X,\col{T})$, when $\col{V}$ refines $\col{U}$ this induces a homomorphism $\check{H}^\bullet(X;\col{U}) \to \check{H}^\bullet(X;\col{V})$ on \v{C}ech cohomology.
 \end{lemma}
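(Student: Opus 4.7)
The plan is to pass from covers to cochains, build an explicit cochain map at the level of Čech cochains, verify that it is in fact a cochain map, and then show that the resulting map on cohomology does not depend on auxiliary choices.

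First, since $\col{V}$ refines $\col{U}$, for each $V \in \col{V}$ I can invoke the axiom of choice to select some $\rho(V) \in \col{U}$ with $V \subseteq \rho(V)$, giving a set function $\rho : \col{V} \to \col{U}$. Given a Čech $n$-cochain $\alpha$ on $\col{U}$ (valued in whatever fixed coefficient presheaf, typically an abelian group or a sheaf), I define an $n$-cochain $\rho^\sharp \alpha$ on $\col{V}$ by
\begin{equation*}
(\rho^\sharp \alpha)(V_0, \ldots, V_n) \;:=\; \alpha(\rho(V_0), \ldots, \rho(V_n))\big|_{V_0 \cap \cdots \cap V_n},
\end{equation*}
where the restriction is justified because $V_0 \cap \cdots \cap V_n \subseteq \rho(V_0) \cap \cdots \cap \rho(V_n)$.

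Next, I would verify that $\rho^\sharp$ commutes with the Čech coboundary $\delta$: this is a routine index-shifting calculation where both $\delta \rho^\sharp \alpha$ and $\rho^\sharp \delta \alpha$ sum over face omissions of tuples from $\col{V}$, and the further restriction from $\rho(V_0) \cap \cdots \cap \widehat{\rho(V_i)} \cap \cdots \cap \rho(V_n)$ down to $V_0 \cap \cdots \cap \widehat{V_i} \cap \cdots \cap V_n$ matches on both sides by functoriality of the coefficient restrictions. Hence $\rho^\sharp$ is a genuine cochain map and induces a map on cohomology $\rho^*: \check{H}^\bullet(X;\col{U}) \to \check{H}^\bullet(X;\col{V})$.

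The main obstacle, and the only non-bookkeeping step, is showing that $\rho^*$ does not depend on the choice of refinement function. Given two refinement functions $\rho_0, \rho_1 : \col{V} \to \col{U}$, I would construct an explicit chain homotopy $h : C^{n+1}(\col{U}) \to C^n(\col{V})$ by the standard formula
\begin{equation*}
(h\alpha)(V_0, \ldots, V_n) \;:=\; \sum_{i=0}^n (-1)^i \, \alpha(\rho_0(V_0), \ldots, \rho_0(V_i), \rho_1(V_i), \ldots, \rho_1(V_n))\big|_{V_0 \cap \cdots \cap V_n},
\end{equation*}
which is well-defined because each $V_j$ sits inside both $\rho_0(V_j)$ and $\rho_1(V_j)$. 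A direct computation then gives $\delta h + h \delta = \rho_1^\sharp - \rho_0^\sharp$, so $\rho_0^\sharp$ and $\rho_1^\sharp$ agree on cohomology. This produces a canonical map $\check{H}^\bullet(X;\col{U}) \to \check{H}^\bullet(X;\col{V})$ associated to the refinement relation, as claimed.
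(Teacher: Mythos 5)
Your proposal is correct and is exactly the standard argument the paper has in mind: the paper states this lemma as ``(Standard)'' without proof, citing the textbook construction via a refinement function $\tau:\mathcal{V}\to\mathcal{U}$ and remarking that the induced homomorphism is independent of that choice, which is precisely what you establish with the pullback cochain map and the chain homotopy $\delta h + h\delta = \rho_1^\sharp - \rho_0^\sharp$. So you have filled in, correctly and along the same route, the details the paper delegates to the literature.
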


 It is worth noting that the homomorphism --- as usually defined (see for instance \cite{Hubbard_2006}) --- uses a \emph{refinement function} $\tau: \col{V} \to \col{U}$ for which $V \subseteq \tau(V)$ for all $V \in \col{V}$.  The homomorphism on \v{C}ech cohomology apparently depends on such a $\tau$, but in fact the homomorphism is indepedent of the refinement function.

 \begin{definition}
   \label{def:ctop}
   Each object of the category $\cat{PartCovers}$ of \emph{partially covered topological spaces} is a triple $(X,\col{T},\col{U})$, where $(X,\col{T})$ is a topological space and $\col{U} \subseteq \col{T}$ is a collection\footnote{$\col{U}$ may not be a cover of $X$, but covers a subspace.  This is important for Definition \ref{def:consistent_cover}.} of open sets.  Each morphism $(X,\col{T}_X,\col{V}) \to (X,\col{T}_Y,\col{U})$ of $\cat{PartCovers}$ is given by a continuous map $f:(X,\col{T}_X) \to (Y,\col{T}_Y)$ such that each $V \in \col{V}$ is a subset of $f^{-1}(U)$ for some $U\in \col{U}$.  We will sometimes abuse notation and write $f: (X,\col{T}_X,\col{V}) \to (X,\col{T}_Y,\col{U})$ in this case.  Composition of morphisms in $\cat{PartCovers}$ is given by composition of the underlying continuous maps.
   \end{definition}

 \begin{lemma}
   \label{lem:cech_morphism}
   Suppose that $\col{V}$ is a collection of open sets in a topological space $(X,\col{T})$ and $\col{U}$ is a collection of open sets in a topological space $(Y,\col{S})$.  A continuous map $f:(X,\col{T})\to (Y,\col{S})$ induces a homomorphism $\check{H}^\bullet(Y\cap\bigcup\col{U};\col{U})\to \check{H}^\bullet(X\cap\bigcup\col{V};\col{V})$ if for all $V \in \col{V}$, there is a $U \in \col{U}$ such that $V \subseteq f^{-1}(U)$.
 \end{lemma}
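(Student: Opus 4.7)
The plan is to adapt the standard refinement construction underlying Lemma \ref{lem:cech_refinement} to this setting, where the ``refinement'' is only set-theoretic after pulling back along $f$. First, I would invoke the hypothesis to choose (using the axiom of choice if $\col{V}$ is large) a function $\tau:\col{V}\to\col{U}$ such that $V\subseteq f^{-1}(\tau(V))$ for every $V\in\col{V}$. This $\tau$ will play the role of the refinement function in Lemma \ref{lem:cech_refinement}.

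Next, I would define a map on \v{C}ech cochains
\begin{equation*}
  \tau^{\#}:\check{C}^n\bigl(Y\cap\bigcup\col{U};\col{U}\bigr)\to\check{C}^n\bigl(X\cap\bigcup\col{V};\col{V}\bigr)
\end{equation*}
by $(\tau^{\#}\alpha)(V_0,\ldots,V_n):=\alpha(\tau(V_0),\ldots,\tau(V_n))$ on any tuple with $V_0\cap\ldots\cap V_n\ne\emptyset$. The well-definedness of this rule rests on the key observation
\begin{equation*}
  V_0\cap\ldots\cap V_n \subseteq f^{-1}(\tau(V_0))\cap\ldots\cap f^{-1}(\tau(V_n)) = f^{-1}\bigl(\tau(V_0)\cap\ldots\cap\tau(V_n)\bigr),
\end{equation*}
so nonemptiness in $\col{V}$ forces nonemptiness in $\col{U}$, ensuring the right-hand side is a defined value of the input cochain. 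Verifying that $\tau^{\#}$ commutes with the \v{C}ech coboundary is then a routine index chase: the differential deletes each slot in turn with an alternating sign, and $\tau^{\#}$ relabels slots uniformly, so the computation is identical to the one in the classical case. Passing to cohomology yields the claimed homomorphism.

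Finally, I would verify independence from the choice of $\tau$ via the usual cochain homotopy argument. Given two selections $\tau_0,\tau_1$, define $h^n:\check{C}^n(\col{U})\to\check{C}^{n-1}(\col{V})$ by the alternating sum
\begin{equation*}
  (h^n\alpha)(V_0,\ldots,V_{n-1}):=\sum_{i=0}^{n-1}(-1)^i\alpha\bigl(\tau_0(V_0),\ldots,\tau_0(V_i),\tau_1(V_i),\ldots,\tau_1(V_{n-1})\bigr),
\end{equation*}
and observe that the same $f^{-1}$ trick guarantees each of the resulting $\col{U}$-intersections is nonempty whenever the corresponding $\col{V}$-intersection is, so the standard identity $d h + h d = \tau_1^{\#}-\tau_0^{\#}$ goes through verbatim.

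The main obstacle is precisely this bookkeeping of supports: one must ensure that every simplex in the nerve of $\col{V}$ on which a cochain or homotopy is evaluated is mapped by $\tau$ (or by a concatenation of $\tau_0$ and $\tau_1$) to a bona fide simplex in the nerve of $\col{U}$. Once that translation is in hand from the single containment displayed above, every remaining step is a direct port of the classical \v{C}ech refinement proof, and in fact the present lemma reduces to Lemma \ref{lem:cech_refinement} applied to the pullback cover $\{f^{-1}(U):U\in\col{U}\}$, which $\col{V}$ refines by hypothesis.
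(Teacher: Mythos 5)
Your proposal is correct and takes essentially the same route as the paper: the paper simply factors the map as the chain map induced by $f$ into the pullback cover $f^{-1}(\col{U})$ followed by the refinement homomorphism of Lemma \ref{lem:cech_refinement}, which is exactly the reduction you note at the end. Your explicit $\tau$-level cochain formula and the homotopy showing independence of $\tau$ just unfold the classical details that the paper delegates to the standard refinement construction.
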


 Thus, \v{C}ech cohomology is a contravariant functor from $\cat{PartCovers}$ to the category of abelian groups.
 
 \begin{proof}
   Since $\col{U}$ contains open sets and $f$ is continuous, then
   \begin{equation*}
     f^{-1}(\col{U}) = \{ f^{-1}(U) : U \in \col{U} \}
   \end{equation*}
   is a collection of open sets in $X$.  Thus there is a chain map $\check{C}^k(Y\cap\bigcup\col{U};\col{U}) \to \check{C}^k(X\cap\bigcup f^{-1}(\col{U});f^{-1}(\col{U}))$ for every $k$.  Observe that $\col{V}$ refines $f^{-1}(\col{U})$ by assumption, so Lemma \ref{lem:cech_refinement} means that the desired homomorphism is the composition
   \begin{equation*}
     \check{H}^\bullet(Y\cap\bigcup\col{U};\col{U}) \to \check{H}^\bullet(X\cap\bigcup f^{-1}(\col{U});f^{-1}(\col{U})) \to \check{H}^\bullet(X\cap\bigcup\col{V};\col{V}).
   \end{equation*}
 \end{proof}

 \begin{definition}
   \label{def:sctop}
   The category of \emph{coarsening filtrations} $\cat{CoarseFilt}$ describes a collection of refinements of partial covers on fixed topological spaces.  Each object is a triple $(X,\col{T},{\bf V})$ where $(X,\col{T})$ is a topological space and ${\bf V}$ is a function
   \begin{equation*}
     {\bf V}:\mathbb{R} \to 2^{\col{T}},
   \end{equation*}
   for which ${\bf V}(t)$ is a collection of open sets in $(X,\col{T})$ for all $t$ and ${\bf V}(t)$ refines ${\bf V}(t')$ whenever $t \le t'$.  If $(X,\col{T},{\bf V})$ and $(Y,\col{S},{\bf U})$ are objects of $\cat{CoarseFilt}$, a morphism $(X,\col{T},{\bf V})\to(Y,\col{S},{\bf U})$ in $\cat{CoarseFilt}$ consists of an order preserving function $\phi:\mathbb{R}\to\mathbb{R}$ and a family of continuous functions $f_t:(X,\col{T})\to(Y,\col{S})$ such that for all $t$, ${\bf V}(s)$ refines $f_t^{-1}({\bf U}(t))$ for all $s \in \phi^{-1}(t)$.

   Composition of morphisms in $\cat{CoarseFilt}$ is given by composing the order-preserving functions and the continuous maps.  Explicitly, if $\phi_1,f_\bullet: (X,\col{T},{\bf V})\to(Y,\col{S},{\bf U})$ and $\phi_2,g_\bullet: (Y,\col{S},{\bf U})\to(Z,\col{R},{\bf W})$ are two morphisms in $\cat{CoarseFilt}$, then their composition is given by $\phi = \phi_2 \circ \phi_1$ (still order preserving) and $h_t = g_t \circ f_{\sup \phi_2^{-1}(t)}$. 
 \end{definition}

 To see that composition in $\cat{CoarseFilt}$ is well-defined, observe that if $r \in \phi^{-1}(t) = (\phi_2 \circ \phi_1)^{-1}(t)$, then $\phi_1(r) \in \phi_2^{-1}(t)$.  Therefore, ${\bf U}(\phi_1(r))$ refines $g^{-1}_t({\bf W}(t))$.  Since ${\bf V}(r)$ refines\\ $f_{\phi_1(r)}^{-1}({\bf U}(\phi_1(r)))$, these two refinements together mean that ${\bf V}(r)$ refines $f_{\phi_1(r)}^{-1}\left(g^{-1}_t({\bf W}(t))\right)$, which itself refines $h_t^{-1}({\bf W}(t))$ because $\phi_1(r) \le \sup \phi_2^{-1}(t)$.

 \begin{proposition}
   There is a fully faithful functor taking $\cat{CoarseFilt}$ into the category of sheaves of $\cat{PartCovers}$ on the Alexandrov space $(\mathbb{R},\le)$.
 \end{proposition}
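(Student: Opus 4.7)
The plan is to use the correspondence between sheaves on the Alexandrov space $(\mathbb{R},\le)$ and functors out of the poset $(\mathbb{R},\le)$. In this topology, the minimal open set containing $t$ is the principal up-set $[t,\infty)$, so a sheaf is determined by its values and restrictions on principal opens together with the gluing axiom.

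On objects, set $\Phi(X,\col{T},{\bf V})=\shf{F}$ with $\shf{F}([t,\infty)):=(X,\col{T},{\bf V}(t))$, and let the restriction $\shf{F}([t,\infty)\supseteq[t',\infty))$ for $t\le t'$ be $\id_X$. This identity is a legal morphism in $\cat{PartCovers}$ precisely because ${\bf V}(t)$ refines ${\bf V}(t')$, the defining hypothesis on $\cat{CoarseFilt}$ objects. For non-principal opens $(r,\infty)=\bigcup_{s>r}[s,\infty)$, extend $\shf{F}$ by the sheaf gluing condition, which in this Alexandrov setting amounts to a limit in $\cat{PartCovers}$ along identities.

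On morphisms, send $(\phi,f_\bullet)$ to the sheaf morphism along $\phi$ (continuous in the Alexandrov topology iff order-preserving) whose component at $[t,\infty)$ has underlying continuous map $f_t$. The hypothesis that ${\bf V}(s)$ refines $f_t^{-1}({\bf U}(t))$ for $s\in\phi^{-1}(t)$ is exactly the $\cat{PartCovers}$ condition relating the domain $\shf{F}(\phi^{-1}([t,\infty)))$ and codomain $\shf{G}([t,\infty))$. Functoriality follows by matching the composition rule $h_t=g_t\circ f_{\sup\phi_2^{-1}(t)}$ of Definition~\ref{def:sctop} to the sheaf-morphism composition of Definition~\ref{def:shv}: the $\sup\phi_2^{-1}(t)$ indexing arises because $\sup\phi_2^{-1}(t)$ is the correct label for the principal open $\phi_2^{-1}([t,\infty))$ in the identification with $\mathbb{R}$-indexed data.

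Faithfulness is immediate: $\phi$ is recovered as the sheaf morphism's base map, and each $f_t$ as the underlying continuous map of the component at $[t,\infty)$. Fullness follows by assembling these data from an arbitrary sheaf morphism and verifying that the resulting $(\phi,f_\bullet)$ satisfies the $\cat{CoarseFilt}$ refinement conditions, which drop out of the $\cat{PartCovers}$ morphism conditions on each component. The technically delicate step, and the main obstacle, is verifying the Definition~\ref{def:shv} commutativity square for each pair $[t',\infty)\subseteq[t,\infty)$: because restrictions of $\shf{F}$ and $\shf{G}$ are identities on underlying spaces, this square imposes a coherence condition on consecutive members of the family $f_\bullet$ that must be matched to the $\sup\phi^{-1}(t)$ indexing. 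Pinning down this alignment, together with the limit construction in $\cat{PartCovers}$ for non-principal up-sets $(r,\infty)$, is the technical heart of a complete proof.
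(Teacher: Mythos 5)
Your construction is the same as the paper's: stalks $\shf{S}([t,\infty)) := (X,\col{T},{\bf V}(t))$ with identity restrictions (legal in $\cat{PartCovers}$ precisely because ${\bf V}(t)$ refines ${\bf V}(t')$), and components at $[t,\infty)$ given by $f_t$ over the preimage principal open. However, the step you offer as the justification of functoriality contains a concrete error, and the step you defer is exactly where the paper's proof does its work. For an order-preserving $\phi$, the preimage of the principal open is $\phi^{-1}([t,\infty)) = [\inf\phi^{-1}(t),\infty)$, so the domain of the component at $[t,\infty)$ is the stalk labeled by $\inf\phi^{-1}(t)$, not $\sup\phi^{-1}(t)$; the $\sup\phi_2^{-1}(t)$ in $h_t = g_t\circ f_{\sup\phi_2^{-1}(t)}$ is a choice made in the composition rule of Definition~\ref{def:sctop} and does not label $\phi_2^{-1}([t,\infty))$. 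Consequently, composing the two sheaf morphisms yields the component $g_t\circ f_{\inf\phi_2^{-1}(t)}$ on the stalk at $\inf(\phi_1^{-1}\circ\phi_2^{-1})(t)$, which need not coincide with $h_t$ as a continuous map, so functoriality does not ``follow by matching'' indices. The paper resolves this mismatch by an explicit refinement argument: if $r \in (\phi_1^{-1}\circ\phi_2^{-1})(t)$ then well-definedness of composition in $\cat{CoarseFilt}$ gives that ${\bf V}(r)$ refines $h_t^{-1}({\bf W}(t))$, and since $\inf(\phi_1^{-1}\circ\phi_2^{-1})(t) \le r$ the same holds for ${\bf V}(\inf(\phi_1^{-1}\circ\phi_2^{-1})(t))$; this is what shows the $\cat{CoarseFilt}$ composite is carried to a legitimate composite of $\cat{PartCovers}$ morphisms. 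You explicitly postpone this verification as ``the technical heart,'' so the composition-preservation claim is not actually established in your proposal.

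On the other hand, the two items you single out as the main obstacles are not where the difficulty lies. Over the Alexandrov space $(\mathbb{R},\le)$ the paper works only with the minimal open sets $[t,\infty)$ and never needs an extension to non-principal up-sets, so no limit construction in $\cat{PartCovers}$ is required. And the compatibility across $t \le t'$ that the paper records is exactly the pair of refinement conditions defining $\cat{PartCovers}$ morphisms at each level, which, as the paper observes, is ``precisely --- not more nor less than'' the data of a $\cat{CoarseFilt}$ morphism; this is the source of fullness and faithfulness, matching the last part of your argument.
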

 \begin{proof}
   Given an object $(X,\col{T},{\bf V})$ of $\cat{CoarseFilt}$, define a sheaf $\shf{S}$ by its stalks
   \begin{equation*}
     \shf{S}\left([t,\infty)\right) := (X,\col{T},{\bf V}(t))
   \end{equation*}
   for each $t \in \mathbb{R}$ (the smallest open set containing $t$ in the Alexandrov topology for $(\mathbb{R},\le)$ is $[t,\infty)$) and restrictions
   \begin{equation*}
     \shf{S}\left([t',\infty) \subseteq [t,\infty)\right) := \id_{X}
   \end{equation*}
   for each $t \le t'$.  The restrictions are morphisms in $\cat{PartCovers}$ because ${\bf V}(t)$ refines ${\bf V}(t')$.  Evidently this assignment is one-to-one on objects.

   Each $\cat{CoarseFilt}$ morphism $(X,\col{T},{\bf V})\to(Y,\col{S},{\bf U})$ consisting of an order preserving function $\phi:\mathbb{R}\to\mathbb{R}$ and a family of continuous functions $f_t:(X,\col{T})\to(Y,\col{S})$ is transformed into a sheaf morphism according to the following procedure.  Suppose that $t \le t'$.  Since order preserving maps are continuous in the Alexandrov topology, this means that $\phi^{-1}([t,\infty))$ is an Alexandrov-open set of the form
     \begin{equation*}
       \phi^{-1}([t,\infty)) = [\inf \phi^{-1}(t),\infty).
     \end{equation*}
     Additionally, if $t \le t'$ this implies that $\inf \phi^{-1}(t) \le \inf \phi^{-1}(t')$.  Thus any morphism between any two such sheaves along an order preserving map $\phi:\mathbb{R} \to \mathbb{R}$ consists of a commutative diagram
   \begin{equation*}
     \xymatrix{
     (X,\col{T}_X,{\bf V}(\inf \phi^{-1}(t))) \ar[d]_{\id_X} \ar[r]^(0.6){m_t} & (Y,\col{T}_Y,{\bf U}(t)) \ar[d]^{\id_Y}\\
       (X,\col{T}_X,{\bf V}(\inf \phi^{-1}(t'))) \ar[r]^(0.6){m_{t'}} & (Y,\col{T}_Y,{\bf U}(t'))
       }
   \end{equation*}
   where $m_t$ and $m_{t'}$ are morphisms in $\cat{PartCovers}$.  Morphisms in $\cat{PartCovers}$ consist of continuous maps $f_t,f_{t'}: (X,\col{T}_X) \to (Y,\col{T}_Y)$ such that
   \begin{itemize}
   \item For every $V \in {\bf V}(\inf \phi^{-1}(t))$, there is a $U \in {\bf U}(t)$ such that $V \subseteq f_t^{-1}(U)$ and
   \item For every $V' \in {\bf V}(\inf \phi^{-1}(t'))$, there is a $U' \in {\bf U}(t')$ such that $V' \subseteq f_{t'}^{-1}(U')$.
   \end{itemize}
   This is precisely --- not more nor less than --- what is defined by a morphism in $\cat{CoarseFilt}$, so the functor we are constructing must be full and faithful.

   Composition of morphisms in $\cat{CoarseFilt}$ is preserved by their transformation to sheaf morphisms as follows: given a pair of $\cat{CoarseFilt}$ morphisms
   \begin{equation*}
     \xymatrix{
       (X,\col{T}_X,{\bf V}) \ar[r]^-{\phi_1,f_\bullet} &  (Y,\col{T}_Y,{\bf U})\ar[r]^-{\phi_2,g_\bullet} & (Z,\col{T}_Z,{\bf W}),
       }
   \end{equation*}
   we have just constructed the following pair of morphisms in $\cat{PartCovers}$
   \begin{equation*}
     \xymatrix{
       (X,\col{T}_X,{\bf V}(\inf (\phi_1^{-1}\circ \phi_2^{-1})(t))) \ar[rr]^-{m_{1,\inf\phi_2^{-1}(t)}} && (Y,\col{T}_Y,{\bf U}(\inf \phi_2^{-1}(t)))  \ar[r]^-{m_{2,t}} &  (Z,\col{T}_Z,{\bf W}(t)) 
     }
   \end{equation*}
   for each $t$.  On the other hand, the composition of the $\cat{CoarseFilt}$ morphisms is given by the order preserving function $\phi = \phi_2 \circ \phi_1$ and the family of continuous functions $h_t = g_t \circ f_{\sup \phi_2^{-1} (t)}$.  We need to show that our construction produces the same composition in $\cat{PartCovers}$ as the composition from $\cat{CoarseFilt}$.
 
   If $r \in (\phi_1^{-1} \circ \phi_2^{-1})(t)$, the well-definedness of composition in $\cat{CoarseFilt}$ requires that ${\bf V}(r)$ refines $h_t^{-1}({\bf W}(t))$.  Since $\inf (\phi_1^{-1} \circ \phi_2^{-1})(t) \le r$, it follows that ${\bf V}(\inf (\phi_1^{-1} \circ \phi_2^{-1})(t))$ refines $h_t^{-1}({\bf W}(t))$ as well.  More plainly, we have shown that for every $V \in {\bf V}(\inf (\phi_1^{-1}\circ \phi_2^{-1})(t))$, there is a $W \in {\bf W}(t)$ such that $V \subseteq h_t^{-1}(W)$, namely that this composition in $\cat{CoarseFilt}$ yields a composition in $\cat{PartCovers}$ for each $t$.
  
 \end{proof}

 Note that our definition of the morphisms in $\cat{CoarseFilt}$ is consistent with the definition in \cite[Def. 2.12]{Harker_2018} of a \emph{$\phi$-shifted persistence module morphism}.

  \begin{definition} (equivalent to \cite[Def. 2.12]{Harker_2018})
    \label{def:pmod}
    The category $\cat{PMod}$ of \emph{shifted persistence modules} has persistence modules as its objects and persistence module morphisms shifted by order preserving functions as its morphisms.  Each object of $\cat{PMod}$ is a functor from the poset category $(\mathbb{R},\le)$ to the category of vector spaces.  Explicitly, each object is a choice of a vector space ${\bf E}(t)$ for each real number $t$ and a choice of linear map $f_{{\bf E},t\le t'} : E(t) \to E(t')$ for each $t \le t'$ such that
    \begin{equation*}
      f_{{\bf E}, t \le t''} = f_{{\bf E},t' \le t''} \circ  f_{{\bf E},t \le t'}
    \end{equation*}
    and
    \begin{equation*}
      f_{{\bf E},t \le t} = \id_{{\bf E}(t)}.
    \end{equation*}

    Each morphism ${\bf E} \to {\bf F}$ of $\cat{PMod}$ is an order preserving map $\phi: \mathbb{R} \to \mathbb{R}$ and a choice of a set of maps $g_t : {\bf E}(\inf \phi^{-1}(t)) \to {\bf F}(t)$ such that the following diagram commutes
    \begin{equation*}
      \xymatrix{
        {\bf E}(\inf \phi^{-1}(t)) \ar[rrr]^{f_{{\bf E},\inf \phi^{-1}(t) \le \inf \phi^{-1}(t')}} \ar[d]_{g_t} &&& {\bf E}(\inf \phi^{-1}(t')) \ar[d]^{g_t'} \\
        {\bf F}(t) \ar[rrr]_{f_{{\bf F},t\le t'}} &&& {\bf F}(t')\\
      }
    \end{equation*}
    for all $t \le t'$.
  \end{definition}

 Given this algebraic structure, a compatible geometric structure is given by \emph{interleaving}.  For our purposes, we use a definition that is consistent with the definition of $\delta$-matchings in \cite{Bauer_2014} and $(\tau,\sigma)$-interleavings in \cite[Def. 2.13]{Harker_2018}.

 \begin{definition}
   \label{def:interleaving}
    A pair of morphisms $f_t:(X,\col{T},{\bf V}) \to (Y,\col{S},{\bf U})$, $g_t:(Y,\col{S},{\bf U}) \to (X,\col{T},{\bf V})$ along $\phi$, $\psi$ (respectively) in $\cat{CoarseFilt}$ is \emph{an $\epsilon$-interleaving} if each of the following hold for all $t$:
    \begin{enumerate}
    \item $|\phi(t) - t| < \epsilon$,
    \item $|\psi(t)-t| < \epsilon$,
    \item ${\bf V}(\inf (\psi \circ \phi)^{-1}(t))$ refines ${\bf V}(t)$, and
    \item ${\bf U}(\inf (\phi \circ \psi)^{-1}(t))$ refines ${\bf U}(t)$.
    \end{enumerate}

    An \emph{$\epsilon$-interleaving} between two morphisms $f_t:{\bf E} \to {\bf F}$, $g_t:{\bf E} \to {\bf F}$ along $\phi$, $\psi$ (respectively) in $\cat{PMod}$ may be defined in exactly the same way.
  \end{definition}

  \begin{definition} (compare the definition in \cite{Bauer_2014})
    For each pair of objects $(X,\col{T},{\bf V})$, $(Y,\col{S},{\bf U})$ in $\cat{CoarseFilt}$, the function
    \begin{equation*}
      D((X,\col{T},{\bf V}), (Y,\col{S},{\bf U})) := \inf \{\epsilon : \text{there is an }\epsilon \text{-interleaving }(X,\col{T},{\bf V}) \to (Y,\col{S},{\bf U}) \}
    \end{equation*}
    is a pseudometric, called the \emph{interleaving distance}.

    The same definition works for objects in $\cat{PMod}$, \emph{mutatis mutandis}.
  \end{definition}

  \begin{proposition}
    \label{lem:cech_functor}
   Persistent \v{C}ech cohomology is a contravariant functor $PH:\cat{CoarseFilt}\to\cat{PMod}$.  
 \end{proposition}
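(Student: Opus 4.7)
The plan is to exhibit $PH$ as a contravariant functor by specifying it on objects and morphisms and then checking functoriality. On objects, given $(X,\col{T},{\bf V}) \in \cat{CoarseFilt}$, I define the persistence module ${\bf E} := PH(X,\col{T},{\bf V})$ by setting ${\bf E}(t) := \check{H}^\bullet(X \cap \bigcup {\bf V}(t); {\bf V}(t))$ for each $t \in \mathbb{R}$. The internal structure maps of ${\bf E}$ come from Lemma \ref{lem:cech_refinement}: for $t \le t'$, the defining refinement relation ${\bf V}(t)$ refines ${\bf V}(t')$ induces the required homomorphism (whose direction is absorbed by the contravariance of $PH$). That these structure maps compose associatively and that identity refinements give identity maps is a standard consequence of how refinement functions act on Čech cochains, so ${\bf E}$ is indeed a $\cat{PMod}$ object.

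On morphisms, given $(\phi, f_\bullet) : (X,\col{T},{\bf V}) \to (Y,\col{S},{\bf U})$ in $\cat{CoarseFilt}$, I would build the $\cat{PMod}$ morphism $PH(Y,\col{S},{\bf U}) \to PH(X,\col{T},{\bf V})$ by feeding the defining data straight into Lemma \ref{lem:cech_morphism}. Namely, for each $t$ and each $s \in \phi^{-1}(t)$, the hypothesis that every $V \in {\bf V}(s)$ sits inside some $f_t^{-1}(U)$ with $U \in {\bf U}(t)$ is \emph{exactly} the hypothesis of Lemma \ref{lem:cech_morphism}, and so produces a linear map $\check{H}^\bullet(Y \cap \bigcup {\bf U}(t); {\bf U}(t)) \to \check{H}^\bullet(X \cap \bigcup {\bf V}(s); {\bf V}(s))$. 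Taking $s = \inf\phi^{-1}(t)$ and coupling with the order-preserving function $\phi$ matches the template of Definition \ref{def:pmod} for a $\cat{PMod}$ morphism.

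To verify functoriality, I would check preservation of identities and composition. The identity CoarseFilt morphism induces the identity refinement functions, which in turn induce the identity on each $\check{H}^\bullet$ group. For composition, given composable $(\phi_1,f_\bullet)$ and $(\phi_2,g_\bullet)$ whose $\cat{CoarseFilt}$-composite is $(\phi_2 \circ \phi_1,\, h_t = g_t \circ f_{\sup\phi_2^{-1}(t)})$, the $PH$-image of the composite and the composite of the $PH$-images agree because the cohomology map induced by $h_t$ factors as a pullback along $g_t$ followed by a pullback along $f_{\sup\phi_2^{-1}(t)}$; this is precisely the factorization built into the proof of Lemma \ref{lem:cech_morphism}, which writes the induced map as a refinement homomorphism composed with the chain map on $\check{C}^\bullet$ coming from the continuous map.

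The main obstacle I expect is not any deep algebra but bookkeeping: aligning the $\inf\phi^{-1}$ convention of Definition \ref{def:pmod} with the pointwise data $(f_t, {\bf V}(\inf\phi^{-1}(t)))$, and then verifying the commutative square in the definition of a $\cat{PMod}$ morphism at two indices $t \le t'$. The square commutes because the refinement homomorphisms of Lemma \ref{lem:cech_refinement} are natural with respect to the pullbacks of Lemma \ref{lem:cech_morphism} — equivalently, pulling back a cover and then refining yields the same thing as refining first and then pulling back — but confirming this requires carefully tracking three nested refinement relations (those internal to ${\bf E}$ and ${\bf F}$, and the one arising from $(\phi,f_\bullet)$) at both endpoints of the square.
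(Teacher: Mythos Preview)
Your proposal is correct and follows essentially the same approach as the paper: both define the persistence module via $\check{H}^\bullet$ of the covers ${\bf V}(t)$, obtain the internal structure maps from Lemma~\ref{lem:cech_refinement}, and obtain the morphism on cohomology from Lemma~\ref{lem:cech_morphism}, assembling these into the commutative square required by Definition~\ref{def:pmod}. The paper's proof is terser---it simply displays that square and asserts that the rows are persistence modules and the vertical maps a $\cat{PMod}$ morphism---whereas you spell out the object/morphism assignments and the identity/composition checks explicitly, but the underlying argument is the same.
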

 \begin{proof}
   If $t < t'$, a morphism $(X,\col{T},{\bf V})\to(Y,\col{S},{\bf U})$ in $\cat{CoarseFilt}$ along an order preserving $\phi$ and a family of continuous functions $f_t:(X,\col{T})\to(Y,\col{S})$ induces the following commutative diagram on \v{C}ech cohomologies
   \begin{equation*}
     \xymatrix{
       \check{H}^\bullet(X\cap\bigcup {\bf V}(\inf \phi^{-1}(t));{\bf V}(\inf \phi^{-1}(t))) & \check{H}^\bullet(X\cap\bigcup{\bf V}(\inf \phi^{-1}(t'));{\bf V}(\inf \phi^{-1}(t'))) \ar[l] \\
       \check{H}^\bullet(Y\cap\bigcup{\bf U}(t);{\bf U}(t)) \ar[u] & \check{H}^\bullet(Y\cap\bigcup{\bf U}(t');{\bf U}(t')) \ar[l] \ar[u] \\ 
       }
   \end{equation*}
   in which the horizontal homomorphisms arise from Lemma \ref{lem:cech_refinement} and the vertical homomorphisms arise from Lemma \ref{lem:cech_morphism}.  Each row can be interpreted as a persistence module and the vertical maps are morphisms in $\cat{PMod}$, which completes the argument.
 \end{proof}

 An immediate consequence of \cite[Prop. 4.8]{Harker_2018} is that if two persistence modules are $\epsilon$-interleaved (in the sense of Definition \ref{def:interleaving}), then the bottleneck distance between them is bounded above by $\epsilon$.  Taken with the proof of Proposition \ref{lem:cech_functor}, this implies the next statement.
 
 \begin{corollary}
   \label{cor:sctop_robustness}
   An $\epsilon$-interleaving between objects in $\cat{CoarseFilt}$, induces an $\epsilon$-interleaving between their persistent \v{C}ech cohomologies, so the bottleneck distance between their persistence diagrams is bounded above by $\epsilon$.
 \end{corollary}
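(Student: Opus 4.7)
The plan is to exploit the contravariant functor $PH:\cat{CoarseFilt}\to\cat{PMod}$ from Proposition \ref{lem:cech_functor} and then invoke the bottleneck stability result \cite[Prop. 4.8]{Harker_2018}, which is already cited in the remark preceding the corollary. Given an $\epsilon$-interleaving $(f_\bullet, g_\bullet)$ along $(\phi, \psi)$ between objects $(X,\col{T},{\bf V})$ and $(Y,\col{S},{\bf U})$ in $\cat{CoarseFilt}$, I would first apply $PH$ to each morphism, obtaining $PH(f_\bullet):PH(Y,\col{S},{\bf U})\to PH(X,\col{T},{\bf V})$ and $PH(g_\bullet):PH(X,\col{T},{\bf V})\to PH(Y,\col{S},{\bf U})$ in $\cat{PMod}$ along the same order-preserving maps $\phi$ and $\psi$. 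The directions of morphisms are reversed because $PH$ is contravariant, but the shifts are unchanged, so the corollary reduces to checking that the images form an $\epsilon$-interleaving in $\cat{PMod}$.

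Next I would verify each of the four conditions of Definition \ref{def:interleaving} for the $\cat{PMod}$ images. Conditions (1) and (2) involve only $|\phi(t)-t|$ and $|\psi(t)-t|$ and are preserved verbatim, since the functor does not alter the shift functions. Conditions (3) and (4) concern refinements of covers, and the essential translation step is the observation from Lemma \ref{lem:cech_refinement} that a refinement induces exactly the structural map between the corresponding \v{C}ech cohomology groups. Consequently, the refinement of ${\bf V}(t)$ by ${\bf V}(\inf(\psi\circ\phi)^{-1}(t))$ produces the structural map of the persistence module $PH(X,\col{T},{\bf V})$ between those two indices, and by functoriality of $PH$ this structural map coincides with the composition of $PH(f_\bullet)$ with $PH(g_\bullet)$ at the appropriate shifted index; the symmetric argument using ${\bf U}$ dispatches condition (4). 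Having established that $(PH(f_\bullet), PH(g_\bullet))$ is an $\epsilon$-interleaving in $\cat{PMod}$, the bottleneck bound is an immediate application of \cite[Prop. 4.8]{Harker_2018}.

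The main obstacle is the index bookkeeping in the translation of conditions (3) and (4). One must confirm that the specific refinements identified in Definition \ref{def:interleaving} pass through $PH$ to produce precisely the compositions of interleaving maps needed for the $\cat{PMod}$ version of the same definition, rather than some off-by-one variant. This is ensured by functoriality of $PH$ combined with the independence of the refinement-induced homomorphism from the choice of refinement function (remarked upon after Lemma \ref{lem:cech_refinement}), but the interaction between $\inf\phi^{-1}$, $\inf\psi^{-1}$, and $\inf(\psi\circ\phi)^{-1}$ requires care. Everything else is routine diagram chasing supported by the functors already constructed in the preceding lemmas and propositions.
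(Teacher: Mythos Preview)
Your proposal is correct and follows essentially the same route as the paper: apply the contravariant functor $PH$ of Proposition \ref{lem:cech_functor} to transport the $\epsilon$-interleaving from $\cat{CoarseFilt}$ to $\cat{PMod}$, then invoke \cite[Prop.~4.8]{Harker_2018} for the bottleneck bound. The paper in fact treats the corollary as immediate from the sentence preceding it, so your explicit verification of conditions (1)--(4) and attention to the $\inf\phi^{-1}$ bookkeeping is more detailed than what the paper records.
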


 \section{Consistency radius on subspaces}

 The consistency radius of an assignment is a global property.  A large consistency radius may mean that there is widespread inconsistency among the values of an assignment, or the inconsistency may be localized to a small part of the base space.  To discriminate between these two situations, it is useful to restrict the consistency radius to a subset of the base space.
 
 \begin{definition}
   \label{def:cr_restricted}
   If $a$ is an assignment to a sheaf $\shf{S}$ of pseudometric spaces on a topological space $(X,\col{T})$, and $U \in \col{T}$ is open, then the \emph{local consistency radius on $U$} is
   \begin{equation*}
     c_{\shf{S}}(a,U) := c_{i^*_U \shf{S}}(i^*_U a),
   \end{equation*}
   where $i: U\to X$ is the inclusion.
 \end{definition}

 Intuitively, $c_{\shf{S}}(a,U)$ is only sensitive to the part of the assignment that touches $U$.

 \begin{proposition}
   \label{prop:cr_restricted}
   If $a$ is an assignment to a sheaf $\shf{S}$ of pseudometric spaces on a topological space $(X,\col{T})$, and $U \in \col{T}$ is open, then
   \begin{equation*}
    c_{\shf{S}}(a,U) = \sup_{V_1 \subseteq V_2 \subseteq U} d_{V_1}\left((\shf{S}(V_1 \subseteq V_2))a(V_2),a(V_1)\right).
   \end{equation*}
 \end{proposition}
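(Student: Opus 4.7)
The plan is to unpack the definition of the local consistency radius as given in Definition \ref{def:cr_restricted} and observe that, because $U$ is already open in $(X,\col{T})$, the inverse image sheaf along the inclusion $i:U\to X$ has the same stalks, the same restriction maps, and the same assignment values as $\shf{S}$ and $a$ restricted to open sets contained in $U$.

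First I would note that, since $U \in \col{T}$, the subspace topology on $U$ is $\col{T}_U = \{V \in \col{T} : V \subseteq U\}$. Consequently, the pullback sheaf $i^*_U \shf{S}$ on $(U,\col{T}_U)$ satisfies $(i^*_U \shf{S})(V) = \shf{S}(V)$ for every $V \in \col{T}_U$, with restriction maps $(i^*_U\shf{S})(V_1 \subseteq V_2) = \shf{S}(V_1 \subseteq V_2)$ whenever $V_1 \subseteq V_2 \subseteq U$. The pseudometric on each stalk is likewise inherited. For the assignment, $(i^*_U a)(V) = a(V)$ for every $V \in \col{T}_U$.

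Next, I would apply the definition of consistency radius (Definition \ref{def:assignment}) to the pair $(i^*_U\shf{S}, i^*_U a)$, which gives
\begin{equation*}
c_{i^*_U \shf{S}}(i^*_U a) = \sup_{V_1 \subseteq V_2 \in \col{T}_U} d_{V_1}\bigl((i^*_U \shf{S})(V_1 \subseteq V_2)(i^*_U a)(V_2),\, (i^*_U a)(V_1)\bigr).
\end{equation*}
Substituting the identifications from the previous paragraph and recalling that $V_1 \subseteq V_2 \in \col{T}_U$ is the same condition as $V_1 \subseteq V_2 \subseteq U$ with $V_1,V_2 \in \col{T}$, the right-hand side becomes exactly the supremum in the statement of the proposition.

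There is no real obstacle here: the entire argument is a re-indexing, and the only thing that needs to be checked is that the pullback of a sheaf of pseudometric spaces along an open inclusion preserves stalks, pseudometrics, and restriction maps unchanged. This is immediate from the fact that the subspace topology on an open set coincides with the collection of open sets of the ambient space contained in it, so that no sheafification is required in forming $i^*_U \shf{S}$.
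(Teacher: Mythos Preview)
Your proof is correct and follows essentially the same approach as the paper: both unpack Definition \ref{def:cr_restricted}, use that $i^{-1}(V)=V$ for open $V\subseteq U$ so that the subspace topology is $\{V\in\col{T}:V\subseteq U\}$, and then invoke the standard facts that the pullback along an open inclusion preserves stalks, restriction maps, and assignment values. The paper additionally spells out that the component map $(i^*_U)_V$ is the identity, but this is exactly what you are using when you write $(i^*_U a)(V)=a(V)$.
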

 
 \begin{proof}
   The left side is by definition
     \begin{equation*}
       c_{i^*_U\shf{S}}(i^*_U a) = \sup_{V_1 \subseteq V_2 \in \col{T}} d_{V_1}\left(\left(i^*_U \shf{S}(V_1 \subseteq V_2)\right)(i^*_Ua(V_2)),(i^*_U a(V_1))\right),
     \end{equation*}
     where $i: U\to X$ is the inclusion map.
     Once we notice that $i^{-1}(V) = V$ for each open $V \subseteq U$, the result follows from three facts about pullbacks:
     \begin{enumerate}
     \item $i^*_U \shf{S}(V) = \shf{S}(V)$ for any open $V \subseteq U$,
     \item $i^*_U\shf{S}(V_1 \subseteq V_2) = \shf{S}(V_1 \subseteq V_2)$ for any $V_1 \subseteq V_2 \subseteq U$, and
     \item for any $V \subseteq U$,
       \begin{eqnarray*}
         (i^*_U)_V &:& i^*_U\shf{S}(i^{-1}(V)) \to \shf{S}(V) \\
         &:& i^*_U\shf{S}(V) \to \shf{S}(V) \\
         &:& \shf{S}(V) \to \shf{S}(V)
       \end{eqnarray*}
       is the identity map.  
     \end{enumerate}
 \end{proof}

 The local consistency radius on an open set is indeed local, in the sense that replacing the open set with a larger one cannot decrease the local consistency radius.

 \begin{lemma}
   \label{lem:cr_monotonicity}
   If $a$ is an assignment to a sheaf $\shf{S}$ of pseudometric spaces and $U \subseteq V$ are open subsets of the base space, then
   \begin{equation*}
     c_{\shf{S}}(a,U) \le c_{\shf{S}}(a,V).
   \end{equation*}
 \end{lemma}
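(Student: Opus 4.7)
The plan is to reduce this to a straightforward observation about suprema over nested index sets, using the explicit formula for the local consistency radius established in Proposition \ref{prop:cr_restricted}.

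First, I would invoke Proposition \ref{prop:cr_restricted} to rewrite both sides as suprema:
\begin{equation*}
c_{\shf{S}}(a,U) = \sup_{V_1 \subseteq V_2 \subseteq U} d_{V_1}\bigl((\shf{S}(V_1 \subseteq V_2))a(V_2),a(V_1)\bigr),
\end{equation*}
and likewise for $V$ in place of $U$. Next I would observe that since $U \subseteq V$, every pair of open sets $V_1 \subseteq V_2$ with $V_2 \subseteq U$ automatically satisfies $V_2 \subseteq V$. Hence the index set over which the supremum defining $c_{\shf{S}}(a,U)$ is taken is a subset of the index set for $c_{\shf{S}}(a,V)$. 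Because the same nonnegative quantity is being supremized in each case, taking the supremum over a larger collection can only increase (or preserve) the value, giving the desired inequality.

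The proof is essentially a one-line monotonicity-of-suprema argument, so there is no substantive obstacle. The only subtlety worth flagging is that one must use the formulation in Proposition \ref{prop:cr_restricted} rather than the original definition in terms of the pullback $i_U^*\shf{S}$, since the latter involves sheaves on different base spaces and does not immediately expose the inclusion of index sets. Once the formula from Proposition \ref{prop:cr_restricted} is in hand, the result is immediate.
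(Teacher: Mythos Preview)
Your proposal is correct and matches the paper's approach exactly: the paper's proof is the single sentence that the supremum defining $c_{\shf{S}}(a,V)$ is taken over a strictly larger set than that defining $c_{\shf{S}}(a,U)$, which is precisely the monotonicity-of-suprema argument you describe (with Proposition~\ref{prop:cr_restricted} implicitly supplying the explicit formula).
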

 \begin{proof}
   This follows from the fact that the supremum in the expression for $c_{\shf{S}}(a,V)$ is over a strictly larger set than the supremum in the expression for $c_{\shf{S}}(a,U)$.
 \end{proof}

 \begin{corollary}
   \label{cor:cr_union}
   If $a$ is an assignment to a sheaf $\shf{S}$ of pseudometric spaces and $U, V$ are open subsets of the base space, then
   \begin{equation*}
     \max \{c_{\shf{S}}(a,U),c_{\shf{S}}(a,V)\} \le c_{\shf{S}}(a,U \cup V).
   \end{equation*}
 \end{corollary}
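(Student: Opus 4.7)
The plan is to observe that this is essentially an immediate consequence of Lemma \ref{lem:cr_monotonicity}, applied twice. The only content beyond the lemma is the elementary fact that the maximum of two numbers each bounded by some quantity is itself bounded by that quantity.

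First, I would note that both $U$ and $V$ are open subsets of $U \cup V$, and $U \cup V$ is itself open since the topology is closed under finite unions. Thus the hypotheses of Lemma \ref{lem:cr_monotonicity} are satisfied for the pairs $U \subseteq U \cup V$ and $V \subseteq U \cup V$.

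Next, applying Lemma \ref{lem:cr_monotonicity} to each of these inclusions separately yields
\begin{equation*}
  c_{\shf{S}}(a,U) \le c_{\shf{S}}(a,U \cup V) \quad \text{and} \quad c_{\shf{S}}(a,V) \le c_{\shf{S}}(a,U \cup V).
\end{equation*}
Taking the maximum of the left-hand sides gives the desired inequality. There is no real obstacle here; the statement is purely a packaging of monotonicity into a form convenient for later use (presumably when assembling covers in the consistency filtration construction).
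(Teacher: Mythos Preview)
Your proof is correct and is exactly the intended argument: the paper states this as an immediate corollary of Lemma~\ref{lem:cr_monotonicity} without further proof, and your two applications of that lemma to the inclusions $U \subseteq U \cup V$ and $V \subseteq U \cup V$ are precisely what is meant.
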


 Caution: Corollary \ref{cor:cr_union} does not ensure equality since $a(U \cup V)$ has no particular relationship to $a(U)$ or to $a(V)$.

 \begin{corollary}
   \label{cor:comb_monotonicity}
   For an assignment $a$ to a sheaf $\shf{S}$ of pseudometric spaces, the mean (or max, min, or any monotonic function of) consistency radius of a collection of open sets is monotonic with refinement.  That is, if $\col{U},\col{V}$ are collections of open sets in the base space of $\shf{S}$ and $\col{U}$ is a refinement of $\col{V}$, then the mean consistency radius for $\col{U}$ will be less than the mean consistency radius for $\col{V}$.
  \end{corollary}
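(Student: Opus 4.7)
The plan is to reduce the corollary directly to a pointwise application of Lemma \ref{lem:cr_monotonicity}. Since $\col{U}$ refines $\col{V}$, there is by definition a refinement function $\tau : \col{U} \to \col{V}$ with $U \subseteq \tau(U)$ for every $U \in \col{U}$. Applying Lemma \ref{lem:cr_monotonicity} to each of these inclusions gives
\begin{equation*}
  c_{\shf{S}}(a,U) \le c_{\shf{S}}(a,\tau(U))
\end{equation*}
for every $U \in \col{U}$. Thus the multiset $\{c_{\shf{S}}(a,U) : U \in \col{U}\}$ is termwise dominated by $\{c_{\shf{S}}(a,\tau(U)) : U \in \col{U}\}$, whose entries all lie in $\{c_{\shf{S}}(a,V) : V \in \col{V}\}$. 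This pointwise estimate is the entire sheaf-theoretic content of the argument; everything remaining is bookkeeping about the aggregation.

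For the max case the chain of inequalities is immediate, since $\max_{U \in \col{U}} c_{\shf{S}}(a,U) \le \max_{U \in \col{U}} c_{\shf{S}}(a,\tau(U)) \le \max_{V \in \col{V}} c_{\shf{S}}(a,V)$, and a symmetric argument (pairing each $V$ with some $U \subseteq V$, when the refinement is a cover-refinement so such a $U$ exists) handles the min case. For the mean I would group the elements of $\col{U}$ by their $\tau$-image: setting $\col{U}_V := \tau^{-1}(\{V\})$ and summing the pointwise bounds gives
\begin{equation*}
  \sum_{U \in \col{U}} c_{\shf{S}}(a,U) \;\le\; \sum_{V \in \col{V}} |\col{U}_V|\, c_{\shf{S}}(a,V),
\end{equation*}
so dividing by $|\col{U}| = \sum_V |\col{U}_V|$ expresses the mean over $\col{U}$ as a convex combination of the $c_{\shf{S}}(a,V)$. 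An analogous reweighting handles any other functional that is monotonic under termwise domination of its argument multiset.

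The main obstacle I expect is the mismatch in arity between $\col{U}$ and $\col{V}$: aggregations like the unweighted mean cannot be compared by a naive termwise argument, so the $\tau$-fiber grouping above is essential to convert pointwise monotonicity into a statement about the aggregates. Once this is in place, the corollary follows without any additional machinery beyond Lemma \ref{lem:cr_monotonicity}.
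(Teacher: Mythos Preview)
The paper states this corollary without proof, relying implicitly on Lemma~\ref{lem:cr_monotonicity}; your reduction to that lemma via a refinement function $\tau$ is exactly the intended mechanism, and your argument for the $\max$ case is correct and complete.

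There is, however, a genuine gap in your treatment of the mean. Your convex-combination bound
\[
\frac{1}{|\col{U}|}\sum_{U\in\col{U}} c_{\shf{S}}(a,U)\;\le\;\sum_{V\in\col{V}}\frac{|\col{U}_V|}{|\col{U}|}\,c_{\shf{S}}(a,V)
\]
is correct, but the right-hand side is a \emph{weighted} average of the $c_{\shf{S}}(a,V)$, not the unweighted mean over~$\col{V}$. These need not coincide, and in fact the unweighted-mean claim is false in general: take $\col{V}=\{V_1,V_2\}$ with $c_{\shf{S}}(a,V_1)=10$ and $c_{\shf{S}}(a,V_2)=0$, and refine by $\col{U}=\{V_1,W_1,U_2\}$ where $W_1\subsetneq V_1$ has $c_{\shf{S}}(a,W_1)=9$ and $U_2\subseteq V_2$ has $c_{\shf{S}}(a,U_2)=0$. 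Then the mean over $\col{U}$ is $19/3>5$, the mean over~$\col{V}$. Taking instead $\col{U}=\{V_1\}$ defeats the $\min$ case (you already hedged this with your ``cover-refinement'' caveat). So the ``arity mismatch'' you flagged is not merely a bookkeeping nuisance to be handled by $\tau$-fiber grouping; it is fatal for the unweighted mean and for $\min$ without further hypotheses (for instance that $\tau$ be a bijection, or that one compare against the $\tau$-weighted mean on~$\col{V}$). Only the $\max$ case---which is all that is actually used downstream in the paper---goes through as written.
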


  Lemma \ref{lem:cr_monotonicity} and its Corollaries indicate that the notion of consistency radius can be extended to an assignment that is only partially specified.

  \begin{definition}
    If $\col{U} \subseteq \col{T}$ is a collection of open sets for a topological space $(X,\col{T})$ and $\shf{S}$ is a sheaf of pseudometric spaces on $(X,\col{T})$, then an \emph{assignment supported on $\col{U}$} is an element of $\prod_{U \in \col{U}} \shf{S}(U)$.  The consistency radius of an assignment $a$ supported on $\col{U}$ is written $c_{\shf{S}}(a,\col{U})$, and is the infimum of all consistency radii of assignments $b$ that restrict to $a$, namely
    \begin{equation*}
      c_{\shf{S}}(a,\col{U}) := \inf \left\{c_{\shf{S}}(b) : b \in \prod_{V \in \col{T}}\shf{S}(V) \text{ such that } b(U) = a(U) \text{ whenever } U\in \col{U}\right\}.
    \end{equation*}
    We say that each such assignment $b$ \emph{extends} $a$.
  \end{definition}

  While this definition ensures that
  \begin{equation*}
    c_{\shf{S}}(a) = c_{\shf{S}}(a,X) = c_{\shf{S}}(a,\col{T}), 
  \end{equation*}
  the reader is warned that if an assignment $a$ is supported on a collection $\col{U} = \{U\}$ consisting of a single open set, then $c_{\shf{S}}(a)$ and $c_{\shf{S}}(a,U)$ are both meaningless if the base space topology $\col{T}$ is not trivial.  Nevertheless, the following Proposition is true.

  \begin{proposition}
    \label{prop:partial_monotonicity}
    If $a$ is an assignment to a sheaf $\shf{S}$ on a topological space $(X,\col{T})$ and $U \in \col{T}$, then
    \begin{equation*}
      c_{\shf{S}}(a,\col{T}\cap U) \ge c_{\shf{S}}(a,U).
    \end{equation*}
  \end{proposition}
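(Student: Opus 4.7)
The plan is to unpack both sides via the definitions and observe that the supremum defining $c_{\shf{S}}(a,U)$ (after applying Proposition \ref{prop:cr_restricted}) appears as a sub-supremum inside the one defining $c_{\shf{S}}(b)$ for every extension $b$ of $a|_{\col{T}\cap U}$, which forces the desired inequality after taking the infimum over extensions.

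More concretely, I would first interpret $\col{T}\cap U$ as the collection $\{V \in \col{T} : V \subseteq U\}$, so that an extension of $a|_{\col{T}\cap U}$ is an assignment $b \in \prod_{V\in\col{T}}\shf{S}(V)$ satisfying $b(V) = a(V)$ whenever $V \subseteq U$. Then I would apply Proposition \ref{prop:cr_restricted} to rewrite
\begin{equation*}
c_{\shf{S}}(a,U) = \sup_{V_1 \subseteq V_2 \subseteq U} d_{V_1}\bigl((\shf{S}(V_1 \subseteq V_2))a(V_2),a(V_1)\bigr).
\end{equation*}

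Next, for any extension $b$, I would restrict the supremum in the defining expression
\begin{equation*}
c_{\shf{S}}(b) = \sup_{V_1 \subseteq V_2 \in \col{T}} d_{V_1}\bigl((\shf{S}(V_1 \subseteq V_2))b(V_2),b(V_1)\bigr)
\end{equation*}
to those pairs with $V_1 \subseteq V_2 \subseteq U$. On these pairs the extension property gives $b(V_1) = a(V_1)$ and $b(V_2) = a(V_2)$, so the restricted supremum equals $c_{\shf{S}}(a,U)$. Enlarging the indexing set can only increase the supremum, so $c_{\shf{S}}(b) \ge c_{\shf{S}}(a,U)$ for every extension $b$. Taking the infimum over all such $b$ yields $c_{\shf{S}}(a,\col{T}\cap U) \ge c_{\shf{S}}(a,U)$.

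There is no real obstacle here beyond bookkeeping: the entire argument is the observation that the infimum-over-extensions in the definition of the partial consistency radius can only enlarge things, while the supremum defining the local consistency radius on $U$ is already a sub-supremum whose values are pinned down by the extension condition. The only point requiring a moment of care is to confirm that the indexing conventions line up, namely that $\col{T}\cap U$ in Proposition \ref{prop:partial_monotonicity} really does denote the open sets of $X$ lying in $U$ (equivalently, the subspace topology on $U$), which is what makes the comparison with Proposition \ref{prop:cr_restricted} exact.
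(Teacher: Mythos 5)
Your proposal is correct and follows essentially the same route as the paper: show that $c_{\shf{S}}(a,U)$ is a lower bound for $c_{\shf{S}}(b)$ over every extension $b$ agreeing with $a$ on $\col{T}\cap U$, then pass to the infimum over extensions. The paper's proof is exactly this chain of suprema restricted to pairs $V_1 \subseteq V_2 \subseteq U$, together with the same reading of $\col{T}\cap U$ as the open sets contained in the open set $U$.
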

  \begin{proof}
    Suppose $b$ is another assignment such that $b(V) = a(V)$ if $V \in \col{T} \cap U$.  Thus
    \begin{eqnarray*}
      c_{\shf{S}}(b) &=& \sup_{V_1 \subseteq V_2 \in \col{T}} d_{V_1}( \shf{S}(V_1 \subseteq V_2)b(V_2),b(V_1)) \\
      &\ge& \sup_{V_1 \subseteq V_2 \in \col{T} \cap U} d_{V_1}( \shf{S}(V_1 \subseteq V_2)b(V_2),b(V_1)) \\
      &\ge& \sup_{V_1 \subseteq V_2 \in \col{T} \cap U} d_{V_1}( \shf{S}(V_1 \subseteq V_2)a(V_2),a(V_1)) \\
      &\ge& \sup_{V_1 \subseteq V_2 \subseteq U} d_{V_1}( \shf{S}(V_1 \subseteq V_2)a(V_2),a(V_1)) \\
      &\ge& c_{\shf{S}}(a,U).
    \end{eqnarray*}
    Thus $c_{\shf{S}}(a,U)$ is a lower bound for the consistency radius of any extension $b$ of $a$.  Since $c_{\shf{S}}(a,\col{T}\cap U)$ is the greatest lower bound of these, the result follows.
  \end{proof}

  A nonzero consistency radius for an assignment that is supported on $\col{U}$ rather than the entire space is thus still the obstruction to extending that assignment to a global section.  It is for this reason that we need only consider assignments supported on the entire topology.  If we are given an assignment not supported on a given open set, a value can be supplied that minimizes the overall consistency radius.  Consistency is not assured over the entire base space by this process, so only a subspace will typically be covered by sets whose consistency radius is small.  The extent to which consistency is obtained is formalized by the following definition.

 \begin{definition}
   \label{def:consistent_cover}
   Let $a$ be an assignment to a sheaf $\shf{S}$ of pseudometric spaces on $(X,\col{T})$.  A collection of open sets $\col{U}$ is an \emph{$\epsilon$-consistent collection for $a$} if for every $U \in \col{U}$, the consistency radius on $U$ is less than $\epsilon$ 
   \begin{equation*}
     c_{\shf{S}}(a,U) < \epsilon.
   \end{equation*}
 \end{definition}

 Generally, $\epsilon$-consistent collections only cover part of the base space.

 \begin{example}
   \label{eg:partial_assignment}
   Consider the topological space $(X,\col{T})$ for the finite set $X:=\{A,B,C\}$ in which $\col{T}:=\{\emptyset,\{A\},\{A,B\},\{A,C\},\{A,B,C\}\}$.  We can define a sheaf $\shf{S}$ on $(X,\col{T})$ according to 
   \begin{equation*}
     \xymatrix{
       & \{A\}\ar[dl]\ar[dr]\ar[dd] & & & \mathbb{R} & \\
       \{A,B\}\ar[dr]&&\{A,C\}\ar[dl] & \mathbb{R} \ar[ur]^{1/2} & & \mathbb{R}\ar[ul]_{1}\\
       & \{A,B,C\} & & & \mathbb{R} \ar[ul]^{2r} \ar[uu]^{r} \ar[ur]_{r} & \\
       }
   \end{equation*}
   where the diagram on the left shows the open sets in $\col{T}$ and the diagram on the right shows the restrictions of $\shf{S}$.  (Each restriction of $\shf{S}$ is a homomorphism $\mathbb{R}\to\mathbb{R}$, which is given by multiplication by the listed factor.)  Observe that any sheafification of the sheaf on the partial order for the base $\{\{A\},\{A,B\},\{A,C\}\}$ will be of this form, with the $r\not=0$ being left as a free parameter.

   Consider the assignment $a$ supported on the two sets $\col{U}:=\{\{A,B\},\{A,C\}\}$ on the middle row above, given by
   \begin{equation*}
     a(\{A,B\}) := 0, \; a(\{A,C\}) := 1.
   \end{equation*}
   If we wish to compute $c_{\shf{S}}(a,\col{U})$, we need to extend to assignments over all open sets.  Namely, if
   \begin{equation*}
     a(\{A\}) := y, \; a(\{A,B,C\}) := x,
   \end{equation*}
   we must solve the optimization problem
   \begin{equation*}
     c_{\shf{S}}(a,\col{U}) = \min_{x,y} \max \{ |y|, |1-y|, |rx - y|, |2rx|, |1-rx| \}.
   \end{equation*}
   By inspection (or a short application of the simplex algorithm), we find that $y=1/2$ and $rx = 1/3$.  Thus the critical thresholds are independent of $r$ (and hence really only depend on the sheaf on the partial order rather than the topological space) and are given by
   \begin{equation*}
     c_{\shf{S}}(a,\col{U}) = \min_{x,y \in \mathbb{R}} \max \{ 1/2, 1/2, 1/6, 2/3, 2/3 \} = 2/3.
   \end{equation*}
   The consistency radius on each open set is shown in the left frame of Figure \ref{fig:cf_example}.
   
   Although there is an $2/3$-consistent collection that contains $X$, it happens that $X$ is covered by a $1/2$-consistent collection, namely $\col{U}$ itself.  If $\epsilon < 1/2$, the only consistent open set is $\{A\}$, which clearly does not cover $X$.  
 \end{example}

 The next Lemma provides a useful bound on the consistency radius of the image of an assignment through a morphism of $\cat{ShvPA}$.

  \begin{lemma}
   \label{lem:cr_morphism}
   Suppose $m:(\shf{S},a) \to (\shf{R},b)$ is a morphism in $\cat{ShvPA}$ along $f:X\to Y$, for which all component maps are Lipschitz continuous with constant $K$.  Then for every open $U \subseteq Y$,
   \begin{equation*}
     c_{\shf{R}}(b,U) \le K c_{\shf{S}}(a,f^{-1}(U)).
   \end{equation*}
 \end{lemma}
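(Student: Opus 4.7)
The plan is to reduce the inequality to a pointwise inequality on each restriction pair using Proposition \ref{prop:cr_restricted}, and then exploit two facts simultaneously at each nested pair $V_1 \subseteq V_2 \subseteq U$: the naturality square defining the sheaf morphism $m$, and the assignment compatibility condition $m_V(a(f^{-1}(V))) = b(V)$ from Definition \ref{def:shva}.

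First I would rewrite both sides in the explicit supremum form. By Proposition \ref{prop:cr_restricted},
\begin{equation*}
  c_{\shf{R}}(b,U) = \sup_{V_1 \subseteq V_2 \subseteq U} d_{V_1}\bigl(\shf{R}(V_1 \subseteq V_2)\,b(V_2),\, b(V_1)\bigr),
\end{equation*}
and similarly $c_{\shf{S}}(a,f^{-1}(U))$ is a supremum over pairs $W_1 \subseteq W_2 \subseteq f^{-1}(U)$ of terms $d_{W_1}\bigl(\shf{S}(W_1 \subseteq W_2)a(W_2),a(W_1)\bigr)$. The next step is to bound a single term on the left-hand supremum by a single term on the right-hand supremum. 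Since $f$ is continuous, both $f^{-1}(V_1)$ and $f^{-1}(V_2)$ are open, and $f^{-1}(V_1) \subseteq f^{-1}(V_2) \subseteq f^{-1}(U)$, so the pair $W_i := f^{-1}(V_i)$ is admissible on the right.

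For the bound, I would expand $b(V_1)$ and $b(V_2)$ using $m_V(a(f^{-1}(V))) = b(V)$, then invoke the commutative square of Definition \ref{def:shv} to slide $m$ past the restrictions:
\begin{equation*}
  \shf{R}(V_1 \subseteq V_2)\, b(V_2) = \shf{R}(V_1 \subseteq V_2)\bigl(m_{V_2}(a(f^{-1}(V_2)))\bigr) = m_{V_1}\bigl(\shf{S}(f^{-1}(V_1) \subseteq f^{-1}(V_2))\,a(f^{-1}(V_2))\bigr).
\end{equation*}
Similarly $b(V_1) = m_{V_1}(a(f^{-1}(V_1)))$. Both arguments of $d_{V_1}$ are then images under the single Lipschitz map $m_{V_1}$, so applying Lipschitz continuity with constant $K$ yields
\begin{equation*}
  d_{V_1}\bigl(\shf{R}(V_1 \subseteq V_2)\,b(V_2),\, b(V_1)\bigr) \le K\, d_{f^{-1}(V_1)}\bigl(\shf{S}(f^{-1}(V_1) \subseteq f^{-1}(V_2))\,a(f^{-1}(V_2)),\, a(f^{-1}(V_1))\bigr).
\end{equation*}
The right-hand side is bounded by $K\, c_{\shf{S}}(a,f^{-1}(U))$ (again by Proposition \ref{prop:cr_restricted}). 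Taking the supremum over $V_1 \subseteq V_2 \subseteq U$ on the left gives the desired inequality.

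There isn't really a hard step here: the proof is a diagram chase plus one use of the Lipschitz hypothesis, and both facts needed (the naturality square and the assignment compatibility) are built into the definition of a $\cat{ShvPA}$ morphism. The only mild subtlety is keeping track of the different pseudometrics $d_{V_1}$ on $\shf{R}(V_1)$ versus $d_{f^{-1}(V_1)}$ on $\shf{S}(f^{-1}(V_1))$ when applying Lipschitz continuity to $m_{V_1}$, but Definition \ref{def:shva} tells us this is exactly the domain/codomain pair where the Lipschitz constant $K$ applies.
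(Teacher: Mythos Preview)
Your proposal is correct and follows essentially the same route as the paper's proof: write $c_{\shf{R}}(b,U)$ as a supremum via Proposition~\ref{prop:cr_restricted}, substitute $b(V_i)=m_{V_i}(a(f^{-1}(V_i)))$, use the naturality square to rewrite $\shf{R}(V_1\subseteq V_2)\circ m_{V_2}$ as $m_{V_1}\circ\shf{S}(f^{-1}(V_1)\subseteq f^{-1}(V_2))$, apply the Lipschitz bound for $m_{V_1}$, and then bound the resulting term by $K\,c_{\shf{S}}(a,f^{-1}(U))$. The paper's version is terser but the logical content is identical.
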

 \begin{proof}
   This is merely calculation:
   \begin{eqnarray*}
     c_{\shf{R}}(b,U) & = & \sup_{V_1 \subseteq V_2 \subseteq U} d_{V_1}\left((\shf{R}(V_1 \subseteq V_2))b(V_2),b(V_1)\right) \\
     & = & \sup_{V_1 \subseteq V_2 \subseteq U} d_{V_1}\left((\shf{R}(V_1 \subseteq V_2) \circ m_{V_2})a(f^{-1}(V_2)),m_{V_1}(a(f^{-1}(V_1)))\right) \\
     & = & \sup_{V_1 \subseteq V_2 \subseteq U} d_{V_1}\left((m_{V_1} \circ \shf{S}(f^{-1}(V_1) \subseteq f^{-1}(V_2)))a(f^{-1}(V_2)),m_{V_1}(a(f^{-1}(V_1)))\right) \\
     & \le & K \sup_{f^{-1}(V_1) \subseteq f^{-1}(V_2) \subseteq f^{-1}(U)} d_{f^{-1}(V_1)}\left(\shf{S}(f^{-1}(V_1) \subseteq f^{-1}(V_2))a(f^{-1}(V_2)),a(f^{-1}(V_1))\right) \\
     &\le& K c_{\shf{S}}(a,f^{-1}(U)).
   \end{eqnarray*}
 \end{proof}

 Lemma \ref{lem:cr_morphism} is a generalization of Proposition \ref{prop:shva_sections}, that morphisms in $\cat{ShvPA}$ preserve global sections, because a global section of $\shf{S}$ has consistency radius zero.

\section{Consistency filtrations}

Given an assignment to a sheaf of pseudometric spaces, there can be many possible $\epsilon$-consistent collections.  Of these, there is a distinguished $\epsilon$-consistent collection that consists of open sets that are as large as possible.

 \begin{lemma}
   \label{lem:consistent_cover}
   For every $\epsilon>0$ and every assignment $a$ to a sheaf $\shf{S}$ of pseudometric spaces on a finite space, there is a unique $\epsilon$-consistent collection $\col{M}_{\shf{S},a}(\epsilon)$ such that every other $\epsilon$-consistent collection is a refinement of $\col{M}_{\shf{S},a}(\epsilon)$.
 \end{lemma}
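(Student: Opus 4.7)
The plan is to construct $\col{M}_{\shf{S},a}(\epsilon)$ explicitly as the collection of open sets that are \emph{maximal} (under inclusion) among those on which $a$ is $\epsilon$-consistent:
\begin{equation*}
\col{M}_{\shf{S},a}(\epsilon) := \{U \in \col{T} : c_{\shf{S}}(a,U) < \epsilon \text{ and } c_{\shf{S}}(a,V) \ge \epsilon \text{ for every } V \in \col{T} \text{ with } V \supsetneq U\}.
\end{equation*}
Finiteness of $\col{T}$ is the single essential hypothesis: it guarantees that maximal elements of the subset $\{U \in \col{T} : c_{\shf{S}}(a,U) < \epsilon\}$ exist, and that every $\epsilon$-consistent open set can be enlarged in finitely many inclusion steps to such a maximal element.

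First I would verify directly from the defining condition that $\col{M}_{\shf{S},a}(\epsilon)$ is itself $\epsilon$-consistent. Then, to show that every $\epsilon$-consistent collection $\col{U}'$ refines it, I would pick an arbitrary $U' \in \col{U}'$, which satisfies $c_{\shf{S}}(a,U') < \epsilon$ by hypothesis, and iteratively enlarge $U'$ within $\col{T}$ as long as the consistency radius stays below $\epsilon$. Lemma \ref{lem:cr_monotonicity} is what justifies this enlargement procedure, since it ensures that the consistency radius is monotone under inclusion, and finiteness of $\col{T}$ forces termination at some maximal $V \supseteq U'$ with $V \in \col{M}_{\shf{S},a}(\epsilon)$. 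The assignment $U' \mapsto V$ is then the required refinement function.

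For uniqueness, I would suppose $\col{M}'$ is another $\epsilon$-consistent collection satisfying the refinement property. Mutual refinement between $\col{M}_{\shf{S},a}(\epsilon)$ and $\col{M}'$ forces each $M \in \col{M}_{\shf{S},a}(\epsilon)$ to be contained in some $M' \in \col{M}'$; but $M'$ is $\epsilon$-consistent and $M$ is maximal among such sets, so $M = M'$. Hence $\col{M}_{\shf{S},a}(\epsilon) \subseteq \col{M}'$, and a symmetric argument, together with the fact that any non-maximal element of $\col{M}'$ is already dominated by an element of $\col{M}_{\shf{S},a}(\epsilon)$, identifies the two collections as the same set of maximal $\epsilon$-consistent open sets.

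The main subtlety -- more a matter of careful phrasing than a genuine obstacle -- lies in the word ``unique'': in principle a collection could contain both a maximal $\epsilon$-consistent set and a strictly smaller $\epsilon$-consistent set while still enjoying the refinement property, so the uniqueness claim must be read as specifying the canonical representative consisting precisely of the maximal elements. The finiteness hypothesis on $(X,\col{T})$ is what makes both the existence of these maximal elements and the termination of the enlargement procedure effective; without it one would need to invoke a Zorn's-lemma-style argument and the cleanness of the statement would degrade.
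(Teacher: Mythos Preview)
Your proposal is correct and follows essentially the same approach as the paper: define $\col{M}_{\shf{S},a}(\epsilon)$ as the collection of inclusion-maximal $\epsilon$-consistent open sets, use finiteness of $\col{T}$ to guarantee such maxima exist, and invoke Lemma~\ref{lem:cr_monotonicity} to see that every $\epsilon$-consistent open set sits inside one of them. Your treatment is in fact more careful than the paper's on two points: the paper asserts that each $\epsilon$-consistent $U$ is contained in a \emph{unique} maximal $\epsilon$-consistent $U'$, which need not be true (and is not needed), and you rightly flag that ``uniqueness'' of the collection must be read as singling out the canonical representative of maximal elements rather than literal uniqueness among all collections with the refinement property.
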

 This Lemma is not the same as the Theorem in \cite{Praggastis_2016}, where an assignment is made only to vertices on an abstract simplicial complex.
 \begin{proof}
   Lemma \ref{lem:cr_monotonicity} implies that any refinement of an $\epsilon$-consistent collection is itself $\epsilon$-consistent.  Because of the finiteness of the base space, if for some open $U$
   \begin{equation*}
     c_{\shf{S}}(a,U) < \epsilon,
   \end{equation*}
   then there is a unique maximal open set $U'$ containing $U$ for which
   \begin{equation*}
     c_{\shf{S}}(a,U') < \epsilon.
   \end{equation*}
   This argument applied to each open set completes the argument.
 \end{proof}

 \begin{definition}
   \label{def:consistency_filtration}
   The \emph{consistency filtration} ${\bf CF} : \cat{ShvFPA} \to \cat{CoarseFilt}$ associates the coarsening filtration
   \begin{equation*}
     \left({\bf CF}(\shf{S},a)\right)(\epsilon) := \col{M}_{\shf{S},a}(\epsilon)
   \end{equation*}
   to each sheaf assignment $(\shf{S},a)$ on a finite space, in which $\epsilon>0$ selects the threshold for the $\epsilon$-consistent collection $\col{M}_{\shf{S},a}(\epsilon)$.
 \end{definition}

 In what follows, we will show that the consistency filtration is \emph{both} a functor (Theorem \ref{thm:cf_functor}) \emph{and} a continuous function (Theorem \ref{thm:cf_robustness}), depending on the interpretation of $\cat{ShvFPA}$ and $\cat{CoarseFilt}$.  Specifically, although $\cat{ShvFPA}$ was defined as a category (Definition \ref{def:shva}), it is also a topological space according to Corollary \ref{cor:shvpa_is_a_space} (by taking the disjoint union over all topological spaces).  Similarly, although $\cat{CoarseFilt}$ was defined as a category (Definition \ref{def:sctop}), the interleaving distance is a pseudometric for its set of objects (Corollary \ref{cor:sctop_robustness}), making it into a pseudometric space.

   \begin{figure}
  \begin{center}
    \includegraphics[width=5in]{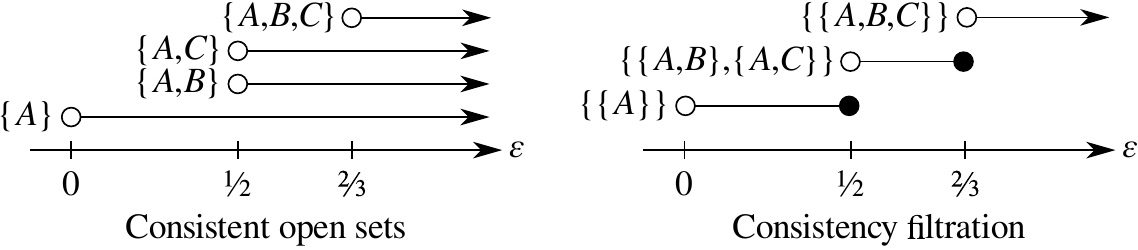}
    \caption{The consistency filtration for the assignment in Example \ref{eg:partial_assignment}; see Example \ref{eg:cf_example} for interpretation.}
    \label{fig:cf_example}
  \end{center}
 \end{figure}

   \begin{example}
     \label{eg:cf_example}
     The consistency filtration for the assignment in Example \ref{eg:partial_assignment} (after extending to an assignment supported on $\col{T}$) is given in Figure \ref{fig:cf_example}.  The left frame of Figure \ref{fig:cf_example} shows the ranges of $\epsilon$ for which each open set is $\epsilon$-consistent.  The frame on the right of Figure \ref{fig:cf_example} shows the consistency filtration, namely the coarsest $\epsilon$-consistent collections.  Observe that this is an element of $\cat{CoarseFilt}$ because $\{\{A\}\}$ refines $\{\{A,B\},\{A,C\}\}$ which refines $\{\{A,B,C\}\}$.

     This particular example has rather uninteresting persistent \v{C}ech cohomology, since $\check{H}^0 \cong \mathbb{R}$ and $\check{H}^1 = 0$ for all thresholds.
 \end{example}

 \begin{theorem}
   \label{thm:cf_functor}
   Consider $\cat{ShvFPA}_L$, the subcategory of $\cat{ShvFPA}$ consisting of sheaf morphisms along homeomorphisms whose component maps are Lipschitz.
   Consistency filtration is a covariant functor ${\bf CF}: \cat{ShvFPA}_L \to \cat{CoarseFilt}$.
 \end{theorem}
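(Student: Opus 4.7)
The plan is to define ${\bf CF}$ on morphisms by using the Lipschitz bound of Lemma \ref{lem:cr_morphism} to rescale the filtration parameter, and then verify the refinement condition as well as identity/composition preservation. First, to check that ${\bf CF}(\shf{S},a)$ actually is an object of $\cat{CoarseFilt}$, observe that if $t \le t'$ and $V \in \col{M}_{\shf{S},a}(t)$, then $c_{\shf{S}}(a,V) < t \le t'$, so by Lemma \ref{lem:consistent_cover} the set $V$ is contained in the unique maximal $t'$-consistent open set (which lies in $\col{M}_{\shf{S},a}(t')$); hence $\col{M}_{\shf{S},a}(t)$ refines $\col{M}_{\shf{S},a}(t')$.

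For a morphism $m:(\shf{S},a)\to(\shf{R},b)$ in $\cat{ShvFPA}_L$ along a homeomorphism $f:X\to Y$, let $K>0$ be a common Lipschitz constant for the finitely many component maps $m_V$ (available since $\col{T}_Y$ is finite). Define ${\bf CF}(m)$ to consist of the order-preserving function $\phi(s) := Ks$ together with the constant family $f_t := f$ for every $t\in\mathbb{R}$. To verify this is a morphism in $\cat{CoarseFilt}$, pick any $s \in \phi^{-1}(t)$, i.e.\ $s=t/K$, and any $V \in \col{M}_{\shf{S},a}(s)$. Because $f$ is a homeomorphism, $f(V)$ is open in $Y$ and $f^{-1}(f(V))=V$, so Lemma \ref{lem:cr_morphism} applied with the open set $f(V)$ yields
\[
c_{\shf{R}}(b,f(V)) \;\le\; K\, c_{\shf{S}}(a,f^{-1}(f(V))) \;=\; K\, c_{\shf{S}}(a,V) \;<\; Ks \;=\; t.
\]
Thus $f(V)$ is $t$-consistent; Lemma \ref{lem:consistent_cover} produces a (unique maximal) $U \in \col{M}_{\shf{R},b}(t)$ containing $f(V)$, so $V \subseteq f^{-1}(U)$, which is exactly what refinement of $f^{-1}({\bf U}(t))$ by ${\bf V}(s)$ requires.

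Identity preservation is immediate: the identity morphism has identity component maps (so $K=1$) along the identity homeomorphism, giving ${\bf CF}(\id) = (\id_{\mathbb{R}}, \id_X)$, the identity in $\cat{CoarseFilt}$. For composition, given $m:(\shf{S},a)\to(\shf{R},b)$ along $f$ with constant $K_1$ and $n:(\shf{R},b)\to(\shf{P},c)$ along $g$ with constant $K_2$, the components of $n\circ m$ are compositions of Lipschitz maps and hence Lipschitz with constant $K_1K_2$. Therefore ${\bf CF}(n\circ m)$ has order-preserving function $s\mapsto K_1K_2s$ and constant family $g\circ f$. On the other hand, by Definition \ref{def:sctop} the composite ${\bf CF}(n)\circ{\bf CF}(m)$ has order-preserving function $(K_2\cdot)\circ(K_1\cdot)(s) = K_1K_2 s$ and family $g_t \circ f_{\sup \phi_2^{-1}(t)} = g \circ f$ (since $f_t = f$ does not depend on $t$); these agree.

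The main obstacle is the refinement step, and it is exactly here that the restriction to $\cat{ShvFPA}_L$ is essential. Lemma \ref{lem:cr_morphism} gives the right bound, but in order to exploit it we must produce an open set $U$ in $Y$ with $V\subseteq f^{-1}(U)$ and whose preimage is no larger than $V$, because Lemma \ref{lem:cr_monotonicity} shows that enlarging the preimage can only inflate $c_{\shf{S}}(a,f^{-1}(U))$ and thereby destroy the bound. The homeomorphism hypothesis lets us take $U=f(V)$ so that $f^{-1}(U)=V$ exactly; without it, the natural choice $U\supseteq f(V)$ satisfies $f^{-1}(U)\supsetneq V$ in general, and functoriality can fail. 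The Lipschitz hypothesis then determines the universal rescaling $\phi(s)=Ks$ of the filtration parameter that makes the diagram commute.
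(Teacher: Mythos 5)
Your proposal is correct and follows essentially the same route as the paper's proof: rescale the filtration parameter by the (finite, common) Lipschitz constant to get the order-preserving map $\phi$, use the homeomorphism so that $U=f(V)$ gives $f^{-1}(U)=V$ exactly in Lemma \ref{lem:cr_morphism}, and invoke the maximality of $\col{M}_{\shf{R},b}(t)$ from Lemma \ref{lem:consistent_cover} to obtain the refinement, then check composition by composing the scalings. Your write-up is in fact slightly cleaner than the paper's in that it checks identity preservation explicitly and avoids the paper's detour through $\st f(V)$, but it contains no new idea or gap.
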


 \begin{proof}
First, we show that ${\bf CF}(\shf{S},a)$ is a coarsening filtration.  To this end, we need only observe that $\col{M}_{\shf{S},a}(t)$ is a refinement of $\col{M}_{\shf{S},a}(t')$ if $t < t'$ by Lemma \ref{lem:cr_monotonicity}.
   
Suppose $m:(\shf{S},a) \to (\shf{R},b)$ is a morphism in $\cat{ShvFPA}$ along the continuous map $f:(X,\col{T}_X) \to (Y,\col{T}_Y)$.  Since we assumed all component maps are Lipschitz and the topologies are finite, we may assume that the maximum Lipschitz constant of any component map is $K$.  Define
\begin{equation*}
  \phi(t) := \begin{cases}
    t & \text{if } K < 1,\\
    Kt & \text{otherwise.}
    \end{cases}
\end{equation*}
which is evidently order-preserving.

We need to argue that $\col{M}_{\shf{S},a}(\inf \phi^{-1}(t))$ refines $f^{-1}(\col{M}_{\shf{R},b}(t))$.  Let $V \in \col{M}_{\shf{S},a}(\inf \phi^{-1}(t))$, which means that $c_{\shf{S}}(a,V) < \inf \phi^{-1}(t) = t/K$.  (If $K < 1$, the bound is evidently $t$.  For brevity we assume $K \ge 1$ in what follows.)  But, due to the maximality of $\col{M}_{\shf{S},a}(\inf \phi^{-1}(t))$, any open set containing $V$ will have consistency radius larger than $t/K$.  Since $\st f(V)$ is open and $(Y,\col{T}_Y)$ is a finite topological space,
\begin{equation*}
  f^{-1}(\st f(V)) =  f^{-1}\left(\cap \{U \in \col{T}_Y : f(V) \subseteq U \}\right)
\end{equation*}
is an open set containing $V$, which means that
\begin{equation*}
  c_{\shf{S}}(a,f^{-1}(\st f(V))) \ge t/K \ge c_{\shf{S}}(a,V).
\end{equation*}
Because any $\col{T}_Y$-open set containing $f(V)$ will contain $\st f(V)$, this inequality means that $f(V)$ being contained in some $U\in \col{M}_{\shf{R},b}(t)$ implies that $V \subseteq f^{-1}(U)$.

Using the assumption that $f$ is a homeomorphism and that the Lipschitz constants of the component maps are less than or equal to 1, Lemma \ref{lem:cr_morphism} implies
\begin{equation*}
  c_{\shf{R}}(b,f(V)) \le K c_{\shf{S}}(a,V) \le t.
\end{equation*}
By the maximality of $\col{M}_{\shf{R},b}(t)$, this means that there is an open $U \in \col{M}_{\shf{R},b}(t)$ that contains $f(V)$.

Finally, we show that ${\bf CF}$ preserves composition of morphisms.  Start with two morphisms in $\cat{ShvFPA}_L$, $m:(\shf{S},a)\to(\shf{R},b)$ and $n:(\shf{R},b) \to (\shf{Q},c)$ along $f:(X,\col{T}_X) \to (Y,\col{T}_Y)$ and $g:(Y,\col{T}_Y) \to (Z,\col{T}_Z)$, respectively.  Using the above construction of ${\bf CF}(m)$, an $\cat{CoarseFilt}$ morphism for $m$, suppose that $\phi_m$ is the order preserving function constructed for $m$ so that $\col{M}_{\shf{S},a}(\inf \phi_m^{-1}(t))$ refines $f^{-1}(\col{M}_{\shf{R},b}(t))$.  Similarly, suppose that $\phi_n$ is the order preserving function constructed for ${\bf CF}(n)$ so that $\col{M}_{\shf{R},b}(\inf \phi_n^{-1}(t))$ refines $g^{-1}(\col{M}_{\shf{Q},c}(t))$.  Notice that the Lipschitz constants of the component maps of $m$ and $n$ impact our construction of $\phi_m$ and $\phi_n$, respectively.  We need to use these data to show that $\col{M}_{\shf{S},a}(\inf \phi_m^{-1}(\phi_n^{-1}(t)))$ refines $(g \circ f)^{-1}(\col{M}_{\shf{Q},c}(t))$.  Since $f$ is continuous,
\begin{equation*}
f^{-1}(\col{M}_{\shf{R},b}(\inf \phi_n^{-1}(t)))\text{ refines }f^{-1}(g^{-1}(\col{M}_{\shf{Q},c}(t))) = (g \circ f)^{-1}(\col{M}_{\shf{Q},c}(t)).
\end{equation*}
We know that
\begin{equation*}
\col{M}_{\shf{S},a}(\inf \phi_m^{-1}(\inf \phi_n^{-1}(t)))\text{ refines }f^{-1}(\col{M}_{\shf{R},b}(\inf \phi_n^{-1}(t))).
\end{equation*}
Since $\phi_m$ and $\phi_n$ are order preserving
\begin{equation*}
  \inf \phi_m^{-1}(\inf \phi_n^{-1}(t)) = \inf \phi_m^{-1}( \phi_n^{-1}(t)), 
\end{equation*}
which means that
\begin{equation*}
\col{M}_{\shf{S},a}(\inf \phi_m^{-1}(\phi_n^{-1}(t)))\text{ refines }(g \circ f)^{-1}(\col{M}_{\shf{Q},c}(t))
\end{equation*}
 as desired.
 \end{proof}

 \begin{remark}
   ${\bf CF}$ is not a faithful functor.  Global sections of a sheaf $\shf{S}$ may not be isomorphic objects in $\cat{ShvFPA}$, as Example \ref{eg:homeo_intransitive} shows, but they all have exactly the same consistency filtration.
 \end{remark}

 Recalling that we can interpret $\cat{ShvPA}(X,\col{T})$ and $\cat{CoarseFilt}$ not as categories but as topological spaces, the consistency filtration can be interpreted as a function.

 \begin{theorem}
   \label{thm:cf_robustness}
   If $(X,\col{T})$ is a finite topological space, then
   ${\bf CF}$ is a continuous function \\ $\cat{ShvPA}(X,\col{T}) \to \cat{CoarseFilt}$ under the interleaving distance.
 \end{theorem}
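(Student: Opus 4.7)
The plan is to reduce to Theorem \ref{thm:consistency_radius_continuous} applied not only to $c_{\shf{S}}(a)=c_{\shf{S}}(a,X)$, but simultaneously to the local consistency radii $c_{\shf{S}}(a,U)$ for every open $U\in\col{T}$. Since $\col{T}$ is finite, the proof of Theorem \ref{thm:consistency_radius_continuous}, applied verbatim to each restricted pair $(i_U^*\shf{S}, i_U^* a)$, yields for any prescribed $\delta>0$ an open neighborhood $Q_U$ of $(\shf{S},a)$ in $\cat{ShvPA}(X,\col{T})$ on which $|c_{\shf{S}}(a,U)-c_{\shf{R}}(b,U)|<\delta$. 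Taking $Q:=\bigcap_{U\in\col{T}} Q_U$ gives a single open neighborhood of $(\shf{S},a)$ on which the bound holds uniformly in $U$.

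Fix $\eta>0$, choose $0<\delta<\eta$, and build $Q$ as above. For any $(\shf{R},b)\in Q$ I will construct an explicit $\eta$-interleaving between ${\bf CF}(\shf{S},a)$ and ${\bf CF}(\shf{R},b)$. Because both filtrations live on the same base space, take each continuous map in the interleaving to be $\id_X$ and take the order-preserving shifts $\phi(t)=\psi(t)=t+\delta$. Conditions (1) and (2) of Definition \ref{def:interleaving} are immediate: $|\phi(t)-t|=|\psi(t)-t|=\delta<\eta$.

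For the refinement conditions defining the morphisms of $\cat{CoarseFilt}$, note that $\phi^{-1}(t)=\{t-\delta\}$. If $U\in\col{M}_{\shf{S},a}(t-\delta)$ then $c_{\shf{S}}(a,U)<t-\delta$, so by the uniform bound $c_{\shf{R}}(b,U)<t$; by maximality of $\col{M}_{\shf{R},b}(t)$ (Lemma \ref{lem:consistent_cover}), $U$ is contained in some element of $\col{M}_{\shf{R},b}(t)$. Hence $\col{M}_{\shf{S},a}(t-\delta)$ refines $\col{M}_{\shf{R},b}(t)$, which is exactly the condition for $\id_X$ along $\phi$ to define a morphism ${\bf CF}(\shf{S},a)\to{\bf CF}(\shf{R},b)$; the symmetric argument handles the reverse morphism along $\psi$. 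Conditions (3) and (4) then reduce to the observation that $(\psi\circ\phi)^{-1}(t)=\{t-2\delta\}$ and $\col{M}_{\shf{S},a}(t-2\delta)$ refines $\col{M}_{\shf{S},a}(t)$ by Lemma \ref{lem:cr_monotonicity}, and likewise for the $\col{R}$-side.

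The main technical obstacle is verifying that Theorem \ref{thm:consistency_radius_continuous} genuinely upgrades to a uniform bound on all local consistency radii. The bound extracted in the body of that theorem's proof decomposes $|c_{\shf{R}}(b)-c_{\shf{S}}(a)|$ into three controllable terms; applied to the restricted sheaves, the same decomposition yields, for each $U$, a neighborhood $Q_U=Q_{U,1}\cap Q_{U,2}\cap Q_{U,3}$ obtained as a finite intersection indexed by pairs $V_1\subseteq V_2\subseteq U$. Only finiteness of $\col{T}$ makes all of these finite intersections (and the intersection over $U$) open, which is why finiteness of the base space is essential and why a direct attempt to interleave via, say, Lipschitz estimates on $c_{\shf{S}}(a,\cdot)$ in a single variable would not suffice.
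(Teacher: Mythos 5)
Your proposal is correct and follows essentially the same route as the paper: both prove the result by applying the continuity of the (local) consistency radius on each restricted sheaf over every open $U$, intersecting the finitely many resulting neighborhoods, and then exhibiting the explicit interleaving with identity maps and shifts $\phi(t)=\psi(t)=t+\delta$, using maximality of $\col{M}_{\shf{R},b}(t)$ (Lemma \ref{lem:consistent_cover}) and monotonicity (Lemma \ref{lem:cr_monotonicity}) for the refinement conditions. The only cosmetic difference is that the paper makes explicit the step of extending the neighborhood obtained in $\cat{ShvPA}(U,\col{T}\cap U)$ to one in $\cat{ShvPA}(X,\col{T})$, which your final paragraph handles implicitly via the restricted three-term decomposition.
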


 Using Corollary \ref{cor:sctop_robustness}, this means that the transformation of isomorphism classes of $\cat{ShvPA}(X,\col{T})$ to persistence diagrams of \v{C}ech cohomology is continuous.

 \begin{proof}
   Since continuity is a local property, we work within each isomorphism class of $\cat{ShvPA}(X,\col{T})$ separately.
   
   Let $\epsilon > 0$ and $(\shf{S},a) \in \cat{ShvPA}(X,\col{T})$ be given.  We aim to show that there is an open neighborhood $Q \subseteq \cat{ShvPA}(X,\col{T})$ containing $(\shf{S},a)$ such that the consistency filtration of any $(\shf{R},b) \in Q$ is $\epsilon$-interleaved with the consistency filtration of $(\shf{S},a)$.
   
   Let $U \in \col{T}$ be an arbitrary open set.  Since $c_{\shf{S}}(a,U) = c_{i^*_U\shf{S}}(i^*_U a)$, this means that the local consistency radius on $U$ is a continuous function $\cat{ShvPA}(U,\col{T}\cap U) \to \mathbb{R}$, where $\col{T}\cap U$ is the subspace topology of $U$.  This means that there is an open $Q_U \subseteq \cat{ShvPA}(U,\col{T}\cap U)$ containing $(\shf{S},a)$ such that for every $(\shf{R},b) \in Q_U$, it follows that $|c_{\shf{R}}(b,U) - c_{\shf{S}}(a,U)|< \epsilon$.  This inequality still holds upon extending $Q_U$ to an open neighborhood $Q'_U$ in $\cat{ShvPA}(X,\col{T})$, since $c_{\shf{R}}(b,U)$ is independent of the sheaf and assignment outside $U$.  The set $Q'_U$ consists of those elements of 
   \begin{equation*}
     Q_U \times \left(\prod_{V' \subseteq V \in \col{T}, V \cap U=\emptyset} C(\shf{S}(V),\shf{S}(V')) \times \prod_{V \in \col{T}, V \cap U=\emptyset} \shf{S}(V)\right)
   \end{equation*}
   that actually correspond to sheaves in $\cat{ShvPA}(X,\col{T})$.  We are largely free to select spaces of local sections and restrictions outside of $U$ subject to commutativity of the diagram of restrictions and the gluing axiom.  Notice that we need only choose these for open sets $V$ that are disjoint from $U$, since the gluing axiom mandates the rest.
   Since $\col{T}$ is finite,
   \begin{equation*}
     Q := \bigcap_{U \in \col{T}} Q'_U
   \end{equation*}
   contains $(\shf{S},a)$ and is still open in $\cat{ShvPA}(X,\col{T})$.  Thus for any $(\shf{R},b) \in Q$, the consistency radius of every open set measured with $(\shf{R},b)$ differs from that measured by $(\shf{S},a)$ by no more than $\epsilon$.

   We now show that ${\bf CF}(\shf{S},a)$ and ${\bf CF}(\shf{R},b)$ are $\epsilon$-interleaved for any $(\shf{R},b) \in Q$.  Specifically, we construct morphisms $f: {\bf CF}(\shf{S},a) \to {\bf CF}(\shf{R},b)$ along the monotonic function $\phi$ and $g: {\bf CF}(\shf{R},b) \to {\bf CF}(\shf{S},a)$ along $\psi$ in $\cat{CoarseFilt}$ that are each others' inverses and $|\phi(t) - t| \le \epsilon$ and $|\psi(t) - t| \le \epsilon$.  This is easily done; let
   \begin{equation*}
     \phi(t) := \psi(t) = t + \epsilon,
   \end{equation*}
   and
   \begin{equation*}
     f_t := g_t := \id_X
   \end{equation*}
   for all $t \in \mathbb{R}$.  Without loss of generality, it remains to show that these are actually morphisms in $\cat{CoarseFilt}$, namely that
   \begin{equation*}
     \left({\bf CF}(\shf{S},a)\right)(\inf \phi^{-1}(t)) = \col{M}_{\shf{S},a}(t-\epsilon)
   \end{equation*}
   refines
   \begin{equation*}
     \left({\bf CF}(\shf{R},b)\right)(t) = f_t^{-1}(\col{M}_{\shf{R},b}(t)) = \col{M}_{\shf{R},b}(t)
   \end{equation*}
   for all $t\in \mathbb{R}$. Suppose that $U \in \col{M}_{\shf{S},a}(t-\epsilon)$, so that
   \begin{equation*}
     c_{\shf{S}}(a,U) < t-\epsilon.
   \end{equation*}
   This means that because $(\shf{R},b) \in Q$,
   \begin{eqnarray*}
     c_{\shf{R}}(b,U) &=& c_{\shf{R}}(b,U) + c_{\shf{S}}(a,U) - c_{\shf{S}}(a,U)\\
     &\le& c_{\shf{S}}(a,U) + | c_{\shf{R}}(b,U) - c_{\shf{S}}(a,U)| \\
     &<& c_{\shf{S}}(a,U) + \epsilon \\
     &<& t-\epsilon + \epsilon = t
   \end{eqnarray*}
   which implies that $U$ is an element (or a subset of some element) of $\col{M}_{\shf{R},b}(t)$.
    
 \end{proof}

 \begin{example}
   \label{eg:point_clouds}
   Consider a subset of $N$ points $X:=\{x_1, \dotsc, x_N\} \subseteq \mathbb{R}^M$ in $M$-dimensional Euclidean space.  We can realize $X$ as a partial assignment $x$ to the constant sheaf $\shf{K}$ of $\mathbb{R}^M$ on a topological space built from a single simplex.  To this end, let $Y := 2^X$ be the power set of $X$, which is also an abstract simplicial complex.  Given the partial order by inclusion $\subseteq$ on $Y$, form the Alexandrov topology $\col{T}$ on $Y$.  Specifically, since each set $U$ that is a star over $\{x_1, \dotsc x_k\}$ is open, let
   \begin{equation*}
     \shf{K}(\st \{x_1, \dotsc x_k\}) := \mathbb{R}^M
   \end{equation*}
   and let every restriction map be the identity map.  Let the assignment $x$ be supported on vertices (only), with
   \begin{equation*}
     x(\st \{x_i\}) := x_i.
   \end{equation*}

   Selecting the extension $a$ of $x$ that minimizes consistency radius,
   \begin{equation*}
     a = \text{argmin} \left\{c_{\shf{K}}(y) : y \in \prod_{U\in \col{T}} \mathbb{R}^M\text{ with } y(\st \{x_i\}) = x_i \text{ for each } x_i \in X\right\}
   \end{equation*}
   yields an assignment whose value on any $\col{T}$-open set $V$ is the circumcenter of that set of points in $X$ contained in $V$.  This is because the extension $a$ minimizes 
   \begin{equation*}
     \left\|a(\st \{x_1, \dotsc, x_k\}) - x_i\right\|
   \end{equation*}
   for all $i=1, \dotsc, k$.  Given this observation, a set $U:=\st \{x_1, \dotsc x_k\}$ is $\epsilon$-consistent if the intersection
   \begin{equation*}
     \bigcap_{i=1}^k B_\epsilon(x_i) 
   \end{equation*}
   of radius $\epsilon$ balls centered at each point is nonempty.  A maximal $\epsilon$-consistent collection $\col{M}_{\shf{K},a}(\epsilon)$ has cohomology isomorphic to the simplicial cohomology of the the radius $\epsilon$ \v{C}ech complex $C_\epsilon(X)$ of the point cloud, namely
   \begin{equation*}
     \check{H}^k(X \cap \bigcup \col{M}_{\shf{K},a}(\epsilon); \col{M}_{\shf{K},a}(\epsilon)) \cong H^k(C_\epsilon(X)).
   \end{equation*}
   Thus the \v{C}ech cohomology of the consistency filtration is isomorphic to the persistent \v{C}ech cohomology of the point cloud.
 \end{example}

 \section{Sheaves on partial orders}

 Much of the subtlety of the examples in this article arises from the fact that the value of an assignment on two open sets has no relationship to the value on their union.  While this is not a problem in principle, it presents a practical issue since sheaves on partial orders are generally most common in applications.  Sheaves on finite partial orders are convenient to specify since they merely record the stalks and restrictions on stars over points\footnote{Which is to say that their \emph{stalks} have the traditional meaning of a \emph{stalk of a sheaf} --- a direct limit.} and have no requirement aside from commutativity of the resulting diagram.  In particular, the gluing axiom is satisfied implicitly since the spaces of sections over unions of stars are not explicitly specified.  Practically, this means that a typical assignment for a sheaf on a partial order will be supported on the stars only.  As Example \ref{eg:partial_assignment} shows, the consistency radius for this kind of assignment may be strictly larger than the supremum of restrictions \emph{between stars}.  To mitigate this difficulty, we define the \emph{star consistency radius} for an open set.

 \begin{definition}
   For an assignment $a$ supported on the stars of a sheaf $\shf{S}$ of pseudometric spaces on an Alexandrov space $X$, the \emph{star consistency radius} on an open subset $U \subseteq X$ is given by
   \begin{eqnarray*}
     c^*_{\shf{S}}(a,U) &:= \max \{& \sup_{y \in U,} \sup_{x \in \st y} d_{\st x}\left(\left(\shf{S}(\st x \subseteq \st y)\right)a(\st y),a(\st x)\right), \\
     && \begin{aligned}\sup_{y \in U,} \sup_{x \in U,} \sup_{z \in \st x \cap \st y} \frac{1}{2}d_{\st z}\left(\left(\shf{S}(\st z \subseteq \st y)\right)a(\st y),\right. \\ \left.\left(\shf{S}(\st z \subseteq \st x)\right)a(\st x)\right) \}\end{aligned}
   \end{eqnarray*}
 \end{definition}

 \begin{lemma}
   \label{lem:star_consistency_bound}
   If $a$ is an assignment supported on the stars of a sheaf $\shf{S}$ of pseudometric spaces on an Alexandrov space $X$, then
   \begin{equation*}
     c^*_{\shf{S}}(a,U) \le c_{\shf{S}}(a,U).
   \end{equation*}
 \end{lemma}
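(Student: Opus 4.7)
The plan is to bound each of the two terms inside the maximum that defines $c^*_{\shf{S}}(a,U)$ by $c_{\shf{S}}(a,U)$ separately, then combine. Throughout I will exploit the Alexandrov property that $\st x$ is the smallest open set containing $x$, so $x \in U$ with $U$ open forces $\st x \subseteq U$, and $z \in \st x \cap \st y$ forces $\st z \subseteq \st x \cap \st y$; these containments guarantee that every star I invoke lies inside $U$ and that every inclusion $\st z \subseteq \st x$ appears as a legitimate index in the supremum defining the ordinary consistency radius on $U$. I will interpret $c_{\shf{S}}(a,U)$ via the infimum, over full assignments $b$ that agree with $a$ on every star, of the usual expression $\sup_{V_1 \subseteq V_2 \subseteq U} d_{V_1}(\shf{S}(V_1 \subseteq V_2)\,b(V_2), b(V_1))$.

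For the first term inside the max, I fix $y \in U$ and $x \in \st y$; then $\st x \subseteq \st y \subseteq U$, and the quantity $d_{\st x}(\shf{S}(\st x \subseteq \st y)\,a(\st y), a(\st x))$ coincides, via $b(\st\cdot) = a(\st\cdot)$, with one of the terms indexed by $V_1 \subseteq V_2 \subseteq U$ in the supremum defining $c_{\shf{S}}(b,U)$. It is therefore bounded by $c_{\shf{S}}(b,U)$ for every extension $b$, and hence by $c_{\shf{S}}(a,U)$ after taking the infimum.

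For the second term, I fix $x,y \in U$ and $z \in \st x \cap \st y$, so that $\st z \subseteq \st x \subseteq U$ and $\st z \subseteq \st y \subseteq U$. I then insert the intermediate value $b(\st z) = a(\st z)$ and apply the triangle inequality for $d_{\st z}$:
\begin{equation*}
d_{\st z}\bigl(\shf{S}(\st z \subseteq \st y)\,a(\st y),\,\shf{S}(\st z \subseteq \st x)\,a(\st x)\bigr) \le d_{\st z}\bigl(\shf{S}(\st z \subseteq \st y)\,b(\st y), b(\st z)\bigr) + d_{\st z}\bigl(b(\st z),\,\shf{S}(\st z \subseteq \st x)\,b(\st x)\bigr) \le 2\,c_{\shf{S}}(b,U).
\end{equation*}
Dividing by $2$, taking suprema over the admissible $x,y,z$, combining with the bound on the first term via the max, and finally passing to the infimum over extensions $b$, will yield $c^*_{\shf{S}}(a,U) \le c_{\shf{S}}(a,U)$.

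The main obstacle is purely bookkeeping: verifying that every star encountered in the two suprema actually lies within $U$ so that the corresponding pair indexes the supremum defining $c_{\shf{S}}(b,U)$, and routing the triangle inequality through the star-valued quantity $a(\st z)$ (which any extension $b$ is forced to agree with) rather than through a value of $b$ that could be chosen independently. The factor of $\tfrac{1}{2}$ in the definition of $c^*_{\shf{S}}$ is precisely calibrated to absorb the two consistency-radius terms produced by this triangle-inequality split, so no slack is lost in the estimate.
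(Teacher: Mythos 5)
Your proof is correct and follows essentially the same route as the paper: the first term is bounded because its index set (pairs $\st x \subseteq \st y \subseteq U$) is contained in the index set defining the consistency radius on $U$, and the second term is handled by the triangle inequality through the value at $\st z$, which is exactly the calculation in Remark \ref{rem:consistency_diameter} (the consistency-diameter bound) that the paper invokes. Your extra bookkeeping — routing the estimate through an arbitrary extension $b$ agreeing with $a$ on stars and verifying the Alexandrov containments $\st x \subseteq U$, $\st z \subseteq \st x \cap \st y$ — just makes explicit what the paper leaves implicit.
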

 \begin{proof}
   Evidently the first quantity taken in the maximum (for the definition of star consistency radius) is less than the consistency radius since the consistency radius takes the supremum over a larger set.  The result follows from the calculation made in Remark \ref{rem:consistency_diameter} since the second quantity is a lower bound on the consistency diameter. 
 \end{proof}

 Without the second quantity in the maximum for the definition of star consistency of Lemma \ref{lem:star_consistency_bound}, the Lemma is clearly still true, but the bound is less tight.

 \begin{corollary}
   If $\col{U}$ is an $\epsilon$-consistent collection, then it is an $\epsilon$-star consistent collection as well.
 \end{corollary}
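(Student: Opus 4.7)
The plan is essentially to apply Lemma \ref{lem:star_consistency_bound} pointwise over the collection. By the definition of $\epsilon$-consistent collection (Definition \ref{def:consistent_cover}), if $\col{U}$ is $\epsilon$-consistent then $c_{\shf{S}}(a,U) < \epsilon$ for every $U \in \col{U}$. Lemma \ref{lem:star_consistency_bound} gives the pointwise inequality $c^*_{\shf{S}}(a,U) \le c_{\shf{S}}(a,U)$, which chains with the previous strict inequality to yield $c^*_{\shf{S}}(a,U) < \epsilon$ for every $U \in \col{U}$. Since this is exactly the condition (by analogy with Definition \ref{def:consistent_cover}, substituting $c^*_{\shf{S}}$ for $c_{\shf{S}}$) for $\col{U}$ to be an $\epsilon$-star consistent collection, the result follows.

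There is essentially no obstacle here; the corollary is immediate from Lemma \ref{lem:star_consistency_bound}. The only thing worth emphasizing in the write-up is that the inequality of the lemma is uniform in $U$ (it holds for every open set, not just globally), so applying it to each $U \in \col{U}$ separately is legitimate and preserves the strict inequality required in the definition of $\epsilon$-consistency. No finiteness of the base space or Lipschitz hypothesis is needed, since Lemma \ref{lem:star_consistency_bound} already holds in the generality of Alexandrov spaces.
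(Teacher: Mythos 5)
Your proof is correct and matches the paper's (implicit) argument: the corollary follows immediately by applying Lemma \ref{lem:star_consistency_bound} to each $U \in \col{U}$ and chaining $c^*_{\shf{S}}(a,U) \le c_{\shf{S}}(a,U) < \epsilon$, exactly as you do. Your remark that no finiteness or Lipschitz hypotheses are needed, and that ``$\epsilon$-star consistent'' is read by analogy with Definition \ref{def:consistent_cover} with $c^*_{\shf{S}}$ in place of $c_{\shf{S}}$, is consistent with the paper's treatment.
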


 Restricting our attention to sheaves on Alexandrov spaces and assignments supported on the set of all stars (and making appropriate changes to the arguments) does not impact the validity of essentially all of the results (Theorem \ref{thm:consistency_radius_continuous}, Lemma \ref{lem:cr_monotonicity}, Corollary \ref{cor:cr_union}, Corollary \ref{cor:comb_monotonicity}, Proposition \ref{prop:partial_monotonicity}, Lemma \ref{lem:cr_morphism}, Theorem \ref{thm:cf_functor}, and Theorem \ref{thm:cf_robustness}) obtained earlier in this article, though the proofs are somewhat more tedious.  

 \section*{Acknowledgements}
 This article is based upon work supported by the Defense Advanced Research Projects Agency (DARPA) and SPAWAR Systems Center Pacific (SSC Pacific) under Contract No. N66001-15-C-4040 and the Office of Naval Research (ONR) under Contract Nos. N00014-15-1-2090 and N00014-18-1-2541. Any opinions, findings and conclusions or recommendations expressed in this article are those of the author and do not necessarily reflect the views of the Defense Advanced Research Projects Agency (DARPA), SPAWAR Systems Center Pacific (SSC Pacific), or the Office of Naval Research (ONR).
 
\bibliographystyle{plainnat}
\bibliography{assignments_bib}
\end{document}